\newcommand{\ver}{September 25, 2008}
\newcommand{\bC}{{\mathbb C}}
\newcommand{\bN}{{\mathbb N}}
\newcommand{\bP}{{\mathbb P}}
\newcommand{\bR}{{\mathbb R}}
\newcommand{\bQ}{{\mathbb Q}}
\newcommand{\bZ}{{\mathbb Z}}
\newcommand{\bx}{{\bf x}}
\newcommand{\cA}{{\mathcal A}}
\newcommand{\cE}{{\mathcal E}}
\newcommand{\cF}{{\mathcal F}}
\newcommand{\cG}{{\mathcal G}}
\newcommand{\cH}{{\mathcal H}}
\newcommand{\cL}{{\mathcal L}}
\newcommand{\cJ}{{\mathcal J}}
\newcommand{\cO}{{\mathcal O}}
\newcommand{\cV}{{\mathcal V}}
\newcommand{\Hom}{{\rm{Hom}}}
\newcommand{\Pic}{{\rm{Pic}}}
\newcommand{\wti}{\widetilde}
\newcommand{\Gr}{\text{\rm Gr}}
\newcommand{\ra}{\rightarrow}
\newcommand{\rndown}[1] {\llcorner {#1} \lrcorner}
\theoremstyle{plain}
\newtheorem{thm}{Theorem}[section]
\newtheorem{cor}[thm]{Corollary}
\newtheorem{lem}[thm]{Lemma}
\newtheorem{prop}[thm]{Proposition}
\theoremstyle{definition}
\newtheorem{rem}[thm]{Remark}
\title[Spectrum of hyperplane arrangements]{Hodge spectrum of hyperplane arrangements}
\author{Nero Budur}
\address{Department of Mathematics, The
University of Notre Dame, IN 46556, USA} \email{nbudur@nd.edu}
\date{\ver}
\keywords{Local systems, Hodge filtration, arrangements, spectrum}
\subjclass[2000]{14B05, 32S35, 32S22}
\thanks {The author was supported by the NSF grant DMS-0700360.}
\begin{document}

\maketitle

\begin{abstract}
In this article there are two main results. The first result gives
a formula, in terms of a log resolution, for the graded pieces of
the Hodge filtration on the cohomology of a unitary local system
of rank one on the complement of an arbitrary divisor in a smooth
projective complex variety. The second result is an application of
the first. We give a combinatorial formula for the spectrum of a
hyperplane arrangement. M. Saito recently proved that the spectrum
of a hyperplane arrangement depends only on combinatorics.
However, a combinatorial formula was missing. The formula is
achieved by a different method.
\end{abstract}

\section{Introduction}

In this article there are two main results. The first result,
Theorem \ref{mainThmsectiononLocSys}, is concerned with the
computation of the Hodge filtration on the cohomology of local
systems on the complement $U$ of an arbitrary divisor $D$ in a smooth
complex projective variety $X$. For a unitary local system of rank
one $\cV$ on $U$, we give a formula in terms of a log resolution
of $(X,D)$ for the graded pieces $\Gr_F^pH^m(U,\cV)$. This formula
is related to the multiplier ideals of $(X,D)$ and generalizes
\cite{Bu-ULS} - Proposition 6.4  and part of \cite{DS}- Theorem 2. It is also related to \cite{Bu-ULS}- Theorems 1.3, 1.4, and \cite{L07}- Theorem 2.1. Hodge numbers of  local systems on the complements of planar divisors and of isolated non normal crossings divisors, in both local and global case, were discussed in \cite{L83}, \cite{L01}, \cite{L03}, \cite{L04}.

\medskip

The second result is an application of Theorem
\ref{mainThmsectiononLocSys} and concerns the Hodge spectrum of a
hyperplane arrangement. For any closed subscheme $D$ of $X$, the
spectrum, as the multiplier ideals and the b-function, is a
measure of the complexity of the singularities of $D$. When $D$ is
a hypersurface with an isolated singularity, the spectrum enjoys a
semicontinuity property which has been very useful for the
deformation theory of such singularities (see \cite{Ku} and
references there). Less is known about the spectrum for arbitrary
singularities. All three types of invariants (spectrum, multiplier
ideals, b-functions) are notoriously difficult to compute, see
\cite{MS-Survey}. Despite implementations of algorithms in
programs like Macaulay 2, Singular, and Risa/Asir, computation in
the cases when the dimension of $X$ is $\ge 3$ is very expensive.
For the class of varieties defined by monomial ideals it can be
said that there are satisfying formulas for all three notions in
terms of combinatorics, making the computations faster
(\cite{Ho}), \cite{DMS}, \cite{BMS1}, \cite{BMS2}). For the class
of hyperplane arrangements, some invariants (such as the ring
$H^*(X-D,\bZ)$, see \cite{OS}) turned out to depend only on
combinatorics. Hence it is natural to ask if the information from
multiplier ideals, spectra, and b-functions for this class is
combinatorially determined. M. Musta\c{t}\u{a} \cite{Mu} (see also
\cite{Te}) gave a formula for multiplier ideals of a hyperplane
arrangement. However, it was not clear that the jumping numbers
(the most basic numerical invariants that come out of the
multiplier ideals) admit a combinatorial formula. M. Saito
\cite{MS} then proved that the spectrum and the jumping numbers of
a hyperplane arrangement depend only on combinatorics. However, a
combinatorial formula was missing. A different proof and a
combinatorial formula for the jumping numbers and for the
beginning piece of the spectrum was given in \cite{Bu-JNHA}. In
this article we give a combinatorial formula and a different proof
of the combinatorial invariance for the spectrum of a hyperplane
arrangement, see Theorem \ref{computationSpectrumFinal}. The
b-function for hyperplane arrangements remains yet to be
determined. See \cite{MS-b} for the latest advances.

\medskip

The structure of the article is the following. In section 2 we fix
notation and review the multiplier ideals, Hodge spectrum, and
intersection theory. In section 3 we prove Theorem
\ref{mainThmsectiononLocSys} on the Hodge filtration for local
systems, based on the geometrical interpretation of rank one
unitary local systems from \cite{Bu-ULS}. In section 4, we recall
first how the cohomology of the Milnor fiber of a homogeneous
polynomial can be understood in terms of local systems. Then we
apply the result of the previous section to reduce the computation
of the spectrum of a homogeneous polynomial to intersection theory
on a log resolution. In section 5, we prove Theorem
\ref{computationSpectrumFinal} on the combinatorial formula for
the spectrum of hyperplane arrangement by making use of the
explicit intersection theory on the canonical log resolution. In
section 6 we give some examples showing how Theorem
\ref{computationSpectrumFinal} works.

\medskip

We thank M. Saito for sharing with us the preprint \cite{MS} which
was the inspiration for this article. We also thank: A. Dimca, A. Libgober, L.
Maxim, and T. Shibuta for useful discussions.



\section{Notation and Review}

We fix notation and review basic notions, which we need later,
about multiplier ideals, Hodge spectrum, and intersection theory.

\medskip
\noindent {\bf Notation.} By a {\it variety} we will mean a
complex algebraic variety, reduced, and irreducible. For a smooth
variety $X$, the {canonical line bundle} is denoted $\omega_X$ and
we always fix a {canonical divisor}, $K_X$, such that
$\cO_X(K_X)=\omega_X$. Let $\mu:Y\ra X$ be a proper birational
morphism. The exceptional set of $\mu$, denoted by $Ex(\mu)$, is
the set of points $\{y\in Y\}$ where $\mu$ is  not biregular. For
a divisor $D$ on $X$ with support $\text{Supp}(D)$, we say that
$\mu$ is a {\it log resolution} of $(X,D)$ if $Y$ is smooth and
$\mu^{-1}(\text{Supp}(D))\cup Ex(\mu)$ is a divisor with simple
normal crossings. Such a resolution always exists, by Hironaka.
The {\it relative canonical divisor} of $\mu$ is
$K_{Y/X}=K_Y-\mu^*(K_X)$. If $D=\sum_{i\in S} \alpha_iD_i$ is a
divisor on $X$ with real coefficients, where $D_i$ are the
irreducible components of $D$ for $i\in S$, and $\alpha_i\in\bR$,
the {round down} of $D$ is the integral divisor
$\rndown{D}=\sum\rndown{\alpha_i}D_i$. Here, $\rndown{.}$ is the
round-down of a real number. We also use $\{.\}$ to mean the
fractional part of a real number. For $\alpha\in \bR^S$ and a
reduced effective divisor $D=\cup_{i\in S}D_i$, we frequently use
the notation $\alpha\cdot D$ to mean the $\bR$-divisor $\sum_{i\in
S}\alpha_iD_i$.

\medskip
\noindent {\bf Multiplier ideals.} See \cite{La}- Chapter 9 for
more on multiplier ideals. Let $X$ be a smooth   variety. Let $D$
be an effective $\bQ$-divisor on $X$. Let $\mu: Y \ra X$ be a log
resolution of $(X,D)$. The {\it multiplier ideal} of $D$ is the
ideal sheaf
$$\cJ(D) := \mu_* \cO_{Y}(K_{Y/X}-\rndown{\mu^* D})\ \ \ \subset
\cO_X.$$ The choice of a log resolution does not matter in the
definition of $\cJ(D)$. Equivalently, $\cJ(D)$ can be defined
analytically to consist, locally, of all holomorphic functions $g$
such that $|g|^2/\prod_{i\in S}|f_i|^{2\alpha_i}$ is locally
integrable, where $f_i$ are local equations of the irreducible
components of $D$ and $\alpha_i$ their multiplicities. For the
following see \cite{La}- Theorems 9.4.1, 9.4.9.

\begin{thm}\label{vanishing}  With the notation as above, $R^j\mu_*\cO_{Y} (
K_{Y/X} -\rndown{\mu^* D}) =0$, for $j>0$. Assume in addition that
$X$ is projective. Let $L$ be any integral divisor such that $L-D$
is nef and big. Then $H^i(X,\cO_X(K_X+L)\otimes_{\cO_X}\cJ(D))=0$
for $i>0$.
\end{thm}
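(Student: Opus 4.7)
The plan is to deduce both assertions from the Kawamata-Viehweg vanishing theorem after a little $\bQ$-divisor bookkeeping.

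For the first (local vanishing) statement, I would apply the relative form of Kawamata-Viehweg to $\mu\colon Y\ra X$. Since the claim is local on $X$, one may shrink and assume $X$ is affine. The key identity of $\bQ$-divisors on $Y$ is
\[
K_{Y/X}-\rndown{\mu^*D} \;=\; K_Y \;-\; \mu^*(K_X+D) \;+\; \{\mu^*D\},
\]
so that $\cO_Y(K_{Y/X}-\rndown{\mu^*D})=\cO_Y(K_Y+M)$ with $M:=-\mu^*(K_X+D)+\{\mu^*D\}$. The divisor $-\mu^*(K_X+D)$ is pulled back from $X$, hence $\mu$-numerically trivial and in particular $\mu$-nef, while $\{\mu^*D\}$ has SNC support by the log-resolution hypothesis and coefficients in $[0,1)$. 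Relative Kawamata-Viehweg then yields $R^j\mu_*\cO_Y(K_Y+M)=0$ for $j>0$, which is the first claim.

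For the second statement, I would combine the first part with the projection formula and the Leray spectral sequence. The identity
\[
\cO_Y\bigl(K_Y+\mu^*L-\rndown{\mu^*D}\bigr)\;=\;\mu^*\cO_X(K_X+L)\;\otimes\;\cO_Y(K_{Y/X}-\rndown{\mu^*D})
\]
together with the projection formula shows that the higher direct images under $\mu$ of the sheaf on the left vanish (by the first part), while its zeroth direct image equals $\cO_X(K_X+L)\otimes\cJ(D)$. Consequently the Leray spectral sequence degenerates and gives
\[
H^i(X,\cO_X(K_X+L)\otimes\cJ(D))\;\cong\;H^i(Y,\cO_Y(K_Y+\mu^*L-\rndown{\mu^*D}))
\]
for every $i$. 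To kill the right-hand side I would use the decomposition of integral divisors $\mu^*L-\rndown{\mu^*D}=\mu^*(L-D)+\{\mu^*D\}$. Here $\mu^*(L-D)$ is nef and big (nefness is preserved under pullback, and bigness under pullback by a proper birational morphism), and $\{\mu^*D\}$ is a $\bQ$-divisor with SNC support and coefficients in $[0,1)$. The standard Kawamata-Viehweg vanishing on $Y$ applies and produces the vanishing for $i>0$.

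The main obstacle is the precise $\bQ$-divisor bookkeeping: one must verify that the two splittings above genuinely fit the hypotheses of the chosen form of Kawamata-Viehweg — integrality of the sum, SNC support with fractional coefficients in $[0,1)$, and preservation of nefness/bigness under $\mu^*$. Once that bookkeeping is in place, both statements are essentially formal consequences of known relative and global vanishing theorems (cf. Lazarsfeld, Chapter 9).
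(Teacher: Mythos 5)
The paper does not prove this theorem; it cites Lazarsfeld's \emph{Positivity in Algebraic Geometry} II (Theorems 9.4.1 and 9.4.9). Your argument is correct and reproduces the standard proof given there: the identity $K_{Y/X}-\rndown{\mu^*D}=K_Y-\mu^*(K_X+D)+\{\mu^*D\}$ places the first part in the hypotheses of relative Kawamata--Viehweg (with $\mu$-bigness automatic since $\mu$ is birational), and the decomposition $\mu^*L-\rndown{\mu^*D}=\mu^*(L-D)+\{\mu^*D\}$ together with projection formula, local vanishing, and Leray degeneration reduces the second part to absolute Kawamata--Viehweg on $Y$; both $\bQ$-divisor bookkeeping steps check out.
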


\medskip
\noindent{\bf Hodge spectrum.} See \cite{Ku}-II.8 for more on
Hodge spectrum. Let $f:(\bC^{n},0)\ra(\bC,0)$ be the germ of a
non-zero holomorphic function. Let  $M_f$ be the Milnor fiber of
$f$ defined as
$$M_f=\{z\in\bC^{n}\ |\ |z|<\epsilon\ {\rm and\ }f(z)=t\}$$
for $0<|t|\ll\epsilon\ll 1$. It will not matter which $t$ is
chosen. The cohomology groups $H^*(M_f,\bC)$ carry a canonical
mixed Hodge structure such that the semisimple part $T_s$ of the
monodromy acts as an automorphism of finite order of these mixed
Hodge structures (see \cite{St77} for $f$ with an isolated
singularity, \cite{Na} and \cite{Sa4} for the general case).
Define for $\alpha\in\bQ$, the {\it spectrum multiplicity} of $f$
at $\alpha$ to be
$$n_\alpha(f):=\sum_{j\in\bZ} (-1)^j \dim
\Gr_F^{\rndown{n-\alpha}}\wti{H}^{n-1+j}(M_f,\bC)_{e^{-2\pi
i\alpha}},$$ where $F$ is the Hodge filtration, and
$\wti{H}^*(M_f,\bC)_\lambda$ stands for the $\lambda$-eigenspace
of the reduced cohomology under $T_s$. The {\it Hodge spectrum} of
the germ $f$ is the fractional Laurent polynomial
$$\text{Sp}(f):= \sum_{\alpha\in\bQ} n_\alpha(f) t^\alpha.$$
It was first defined by Steenbrink (\cite{St77},\cite{St89}). We
are using however a slightly different definition, as in
\cite{Bu-HS}, \cite{BS}.

\begin{prop}\label{rangeofspectrum}(\cite{BS} -Proposition
5.2.) For $\alpha \notin (0,n)$, $n_\alpha(f)=0$
\end{prop}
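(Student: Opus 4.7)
The plan is to deduce the vanishing from two standard range constraints on the mixed Hodge structure on the Milnor fiber cohomology. Recall that $M_f$ is a smooth complex manifold of complex dimension $n-1$ and, by Milnor's fibration theorem, has the homotopy type of a finite CW complex of real dimension at most $n-1$. This yields the topological bound $\widetilde H^k(M_f,\bC)=0$ for $k\ge n$ (and trivially for $k\le 0$). On the other hand, Deligne's mixed Hodge theory for the smooth open variety $M_f$, computed via the logarithmic de Rham complex $\Omega_Y^\bullet(\log E)$ on a smooth compactification $M_f\hookrightarrow Y$, forces $\Gr_F^p H^k(M_f,\bC)=0$ whenever $p<0$ or $p>\dim_\bC M_f=n-1$.

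With $p:=\lfloor n-\alpha\rfloor$ and $k:=n-1+j$, the claim splits into three easy cases. If $\alpha\le 0$, then $p\ge n>n-1$, so every graded piece in the sum vanishes by the upper Hodge bound. If $\alpha>n$, then $p<0$, and the graded pieces vanish by the lower Hodge bound. What remains is the boundary value $\alpha=n$, where $p=0$ and the relevant eigenvalue is $e^{-2\pi i n}=1$, so one must show
\[
\Gr_F^0\,\widetilde H^{n-1+j}(M_f,\bC)_1=0 \quad\text{for all } j.
\]

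For this boundary case the plan is to invoke the description of the unipotent monodromy part via vanishing cycles. Realizing $\widetilde H^k(M_f,\bC)_1$ as the stalk cohomology of $\phi_{f,1}\bQ_X$ (the eigenvalue-$1$ vanishing cycles) via the standard triangle $\bQ_{D,0}\to (\psi_{f,1}\bQ_X)_0\to (\phi_{f,1}\bQ_X)_0\xrightarrow{+1}$ of mixed Hodge structures, one reduces to the fact, due to Steenbrink and made systematic in Saito's theory of mixed Hodge modules, that $\Gr_F^0\phi_{f,1}=0$; equivalently, the Hodge filtration on the unipotent part of the reduced Milnor fiber cohomology begins in degree $\ge 1$. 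The main obstacle is precisely this $\alpha=n$ case: the vanishing at $\alpha\le 0$ and $\alpha>n$ are immediate from the elementary Hodge and dimension bounds, whereas the vanishing at $\alpha=n$ genuinely requires input from the Hodge-theoretic structure of the vanishing cycle functor, exactly the kind of statement formalized in the Budur--Saito reference \cite{BS} cited for this proposition.
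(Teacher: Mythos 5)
The paper offers no proof of this proposition: it simply cites \cite{BS}, Proposition~5.2, so there is no in-paper argument to compare against. Taking your proposal on its own terms, the case split is exactly right and the two ``easy'' ranges are handled correctly: for $\alpha\le 0$ one gets $\lfloor n-\alpha\rfloor\ge n$, for $\alpha>n$ one gets $\lfloor n-\alpha\rfloor<0$, and in both cases the relevant graded piece of $F$ vanishes because the Hodge filtration on $H^k(M_f,\bC)$ is concentrated in degrees $0,\dots,n-1$. The one boundary value with $p\in\{0,\dots,n-1\}$ is indeed $\alpha=n$, giving $p=0$ and eigenvalue $1$, and you correctly single it out as the place where genuine Hodge-theoretic input is needed.

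Two caveats, one minor and one substantive. The minor one: your justification of the bound $0\le p\le n-1$ appeals to ``Deligne's mixed Hodge theory for the smooth open variety $M_f$'' via a smooth compactification. For a general germ $f$ the local Milnor fiber $M_f$ is a Stein manifold, not an algebraic variety, and the mixed Hodge structure entering the definition of $n_\alpha(f)$ is the limit (Steenbrink/Navarro-Aznar/Saito) mixed Hodge structure built from nearby cycles, not Deligne's; the conclusion $\Gr_F^p=0$ outside $[0,n-1]$ is still true, but the justification should go through the nearby-cycle construction rather than $\Omega_Y^\bullet(\log E)$ on a compactification of $M_f$. (The paper itself flags, for homogeneous $f$, that the compatibility of the Steenbrink filtration with the Deligne filtration on the global Milnor fiber is ``well known to experts'' but lacks a clean reference.) The substantive one: for $\alpha=n$ your argument reduces to the statement $\Gr_F^0\,\widetilde H^{m}(M_f,\bC)_1=0$, which you propose to deduce from $\Gr_F^0\phi_{f,1}\bQ_X^H=0$. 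That vanishing is correct and is essentially equivalent to the positivity of the minimal exponent (one has $F_0\cB_f=\cO_X\delta\subset V^{>0}\cB_f$, so $F_0\Gr_V^0\cB_f=0$), and it is indeed the kind of $V$-filtration statement that \cite{BS} establishes. But as written this step is a pointer rather than a proof: you do not verify the identification of the Hodge filtration on the stalk cohomology $\widetilde H^m(M_f)_1$ with the filtration on $\phi_{f,1}$ (which involves the graph embedding, a cohomological shift, and Saito's compatibility of $F$ with the perverse truncations of $\DR$), and this is exactly where the real work in \cite{BS} lies. So the proposal gives the correct skeleton and correctly locates the difficulty at $\alpha=n$, but the boundary case is outsourced to the very reference the paper cites rather than proved.
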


\begin{cor}\label{newdefSp} $$\text{Sp}(f)=\sum_{\alpha\in\; (0,n)\cap\bQ} \left( \sum_{j\in\bZ} (-1)^j \dim
\Gr_F^{\rndown{n-\alpha}}{H}^{n-1+j}(M_f,\bC)_{e^{-2\pi i\alpha}}
\right )\cdot t^\alpha.$$
\end{cor}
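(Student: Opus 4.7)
The plan is to obtain the corollary from the definition of $\text{Sp}(f)$ by performing two independent reductions: first, restrict the range of $\alpha$ via Proposition \ref{rangeofspectrum}; second, replace the reduced cohomology $\wti{H}^*$ by the ordinary cohomology $H^*$ and verify that this substitution changes nothing in the relevant range.

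For the first reduction, I would note that the spectrum is \emph{a priori} supported on rational numbers, since $T_s$ acts with finite order and only eigenvalues that are roots of unity arise in the eigenspace decomposition of $H^*(M_f,\bC)$. Proposition \ref{rangeofspectrum} then allows me to restrict the outer sum in the definition to $\alpha\in(0,n)\cap\bQ$ without loss.

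The second reduction is the real content. Since $\wti{H}^i(M_f,\bC)=H^i(M_f,\bC)$ for $i\neq 0$ and $\wti{H}^0$ differs from $H^0$ only by the one-dimensional space of constants, the only term in the alternating sum affected by the switch corresponds to $j=1-n$. The constants in $H^0(M_f,\bC)$ lie in the eigenspace for eigenvalue $1$ and carry the pure Hodge structure of type $(0,0)$, so they contribute to $\Gr_F^0$ only, and only at values of $\alpha$ with $e^{-2\pi i\alpha}=1$, i.e.\ $\alpha\in\bZ$. For $\alpha\in(0,n)\cap\bZ$, we have $\alpha\in\{1,\dots,n-1\}$, hence $\rndown{n-\alpha}=n-\alpha\geq 1$, which differs from $0$. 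Therefore the discrepancy $\dim\Gr_F^{\rndown{n-\alpha}}H^0(M_f,\bC)_1-\dim\Gr_F^{\rndown{n-\alpha}}\wti{H}^0(M_f,\bC)_1$ vanishes for every $\alpha$ in the restricted range, and the substitution is legitimate.

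I do not expect any real obstacle here: the argument is essentially bookkeeping once one tracks where the constants sit in the Hodge and eigenspace decompositions. The only point that requires a little care is confirming that the endpoint $\alpha=n$, where the discrepancy would actually be felt (since $\rndown{n-n}=0$), is correctly excluded by the open interval coming from Proposition \ref{rangeofspectrum}.
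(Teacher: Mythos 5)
Your proof is correct and follows essentially the same route as the paper's: both observe that the only discrepancy between $\wti{H}^*$ and $H^*$ sits in degree $0$, eigenvalue $1$, and Hodge piece $\Gr_F^0$, and both then note that $\rndown{n-\alpha}=0$ forces $\alpha=n$, which is excluded from $(0,n)$ by Proposition \ref{rangeofspectrum}. The paper phrases the second step by computing $\Gr_F^j H^0(\text{point},\bC)$ explicitly, but this is the same bookkeeping you carry out.
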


\begin{proof} In other words, we can use usual, instead of
reduced, cohomology in the definition of $n_\alpha(f)$, provided
with restrict to the range $\alpha\in (0,n)$. We have
\begin{equation}\label{reducedCohomo}
\wti{H}^j(M_f,\bC)_\lambda =\left\{
\begin{array}{lr}
H^j(M_f,\bC)_\lambda, & \text{ if }j\ne 0\text{ or }\lambda\ne
1,\\
\text{coker}(H^0(\text{point},\bC)\ra H^0(M_f,\bC)_1 )& \text{ if
otherwise,}
\end{array}
\right.
\end{equation}
where the last map is induced by a constant map $X\ra
\text{point}\ \in X$ (see e.g. \cite{Di2}-p.106). Hence if
$\alpha\notin\bZ$, we are in the first case of
(\ref{reducedCohomo}) and we can replace $\wti{H}^*$ by $H^*$ in
the definition of $\text{Sp}(f)$. Assume $\alpha\in\bZ$. We have
$\Gr_F^jH^0(\text{point},\bC)$ is $0$ if $j\ne 0$ and is $\bC$ if
$j=0$. By the second case of (\ref{reducedCohomo}), we only need
to worry about the case when $\rndown{n-\alpha}=n-\alpha$ is
exactly $0$. But this is ruled out by Proposition
\ref{rangeofspectrum}.
\end{proof}

\medskip

\noindent {\bf Intersection theory.} See \cite{Fu} for more on
intersection theory. Let $X$ be a smooth projective   variety of
dimension $n$. For a vector bundle, or locally free
$\cO_X$-module, of finite rank $r$, $\cE$ on $X$, we denote by
$c_i(\cE)$ the image of the $i$-th Chern class of $\cE$ in
$H^{2i}(X,\bZ)$. For $i\ne\{0,\ldots ,r\}$, $c_i(\cE)=0$, and
$c_0(\cE)=1$. We have the following definitions:
\begin{equation}\label{chernDefs}
\begin{array}{lr}
c(\cE)  =\sum_{i} c_i(\cE)\  \ \ &\text{ (total Chern class), }\\
c_t(\cE)  =\sum_{i} c_i(\cE)t^i\  \ \ &\text{ (Chern polynomial), }\\
x_i \text{ formal symbols }\  :\ \prod_{1\le i\le r} (1+x_it)= c_t(\cE)\ \ \ &\text{ (Chern roots), }\\
ch(\cE)=\sum _{1\le i\le r}\exp (x_i) &\text{ (Chern character), }\\
Q(x)=x/(1-\exp(-x)) &\\
td (\cE)=\prod_{1\le i\le r} Q(x_i) &\text{ (Todd class), }\\
c(X)=c(T_X) &\text{ (total Chern class of }X),\\
Td (X)=Td(T_X) &\text{ (Todd class of }X).\\
\end{array}
\end{equation}

The meaning of the Chern roots is the following. The coefficients
of powers of $t$ in $\prod_{1\le i\le r} (1+x_it)$ are elementary
symmetric functions in $x_1,\ldots, x_r$ and they are set to equal
the Chern classes $c_j(\cE)$. Any other symmetric polynomial, such
as the homogeneous terms of fixed degree in the Taylor expansion
of $ch(\cE)$ or $td(\cE)$, can be expressed in terms of elementary
symmetric functions, hence in terms of the $c_j(\cE)$. See
\cite{Fu} -Examples 3.2.3, 3.2.4.

Let $0\ra\cE'\ra\cE\ra\cE''\ra 0$ be an exact sequence of vector
bundles, let $x_1,\ldots, x_r$ be the Chern roots of $\cE$, and
let $\cF$ another vector bundle with Chern roots $y_1,\ldots ,
y_s$. Then (\cite{Fu} -Section 3.2):
\begin{equation}\label{chernFromulas}
\begin{array}{ll}
c_i(\cE^\vee)=(-1)^ic_i(\cE),\ \ \ \  & c_t(\cE) =c_t(\cE')\cdot c_t(\cE''),\\
c_t(\bigwedge^p\cE)=\prod_{i_1<\ldots <i_p}(1+(x_{i_1}+\ldots x_{i_p})t),\ \ \ \ & ch(\cE) =ch(\cE')+ch(\cE''),\\
c_t(\cE\otimes\cF)=\prod_{i,j} (1+(x_i+y_j)t),\ \ \ \ & ch(\cE\otimes\cF) =ch(\cE)\cdot ch(\cF).\\
\end{array}
\end{equation}
For every element of the Grothendieck group of coherent sheaves on
$X$ there are well defined Chern classes. For an algebraic class
$\xi$ in the ring $H^*(X,\bZ)$, let $\xi_j\in H^{2j}(X,\bZ)$
denote the degree $2j$ part of $\xi$, such that $\xi=\sum_j
\xi_j$.

\begin{thm}\label{thm HRR} (Hirzebruch-Riemann-Roch, \cite{Fu}- Corollary
15.2.1) Let $\cE$ be a vector bundle on a smooth projective
  variety $X$ of dimension $n$. Then $\chi (X,\cE)$ is the
intersection number $(ch(\cE)\cdot Td(X))_n$.
\end{thm}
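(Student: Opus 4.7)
The plan is to derive HRR as the degree-zero specialization of Grothendieck-Riemann-Roch (GRR). GRR asserts that for a projective morphism $f\colon X\to Y$ between smooth varieties and any class $\cE$ in the Grothendieck group $K^0(X)$, one has
\[
ch(Rf_*\cE)\cdot Td(Y) \;=\; f_*\bigl(ch(\cE)\cdot Td(X)\bigr)
\]
in $H^*(Y,\bQ)$. Applying this to the structural morphism $f\colon X\to \Spec\bC$, the left side becomes $\sum_i(-1)^i\dim H^i(X,\cE)=\chi(X,\cE)$, while the right side is the integer represented by the degree-$n$ component $(ch(\cE)\cdot Td(X))_n$ under the canonical identification $H^{2n}(X,\bZ)\cong\bZ$. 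That is exactly the statement of the theorem.

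To prove GRR, I would first factor any projective $f$ as $f=p\circ i$, where $i\colon X\hookrightarrow Y\times\bP^N$ is a closed immersion and $p\colon Y\times\bP^N\to Y$ is the projection. The desired identity is functorial under composition of proper morphisms of smooth varieties, so it suffices to verify the two cases separately. For the projection $p$, I would exploit additivity of both sides in $\cE$ together with the projective bundle formula for $H^*(Y\times\bP^N,\bQ)$, reducing to a direct computation of $\chi(\bP^N,\cO(d))$ matched against the explicit Todd class of $\bP^N$. For the closed immersion $i$, I would use deformation to the normal cone to reduce to the zero section of a vector bundle, where the Koszul resolution of the ideal sheaf gives an explicit description of $ch(i_*\cE)$ in terms of $ch(\cE)$ and the Chern classes of the conormal bundle.

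The main obstacle is GRR for closed immersions, because the Chern character does not commute with pushforward; the discrepancy is corrected by a factor of $Td(N_{X/Y})^{-1}$, and establishing this rigorously requires the delicate deformation-to-the-normal-cone machinery developed in \cite{Fu}-Chapter 15. A completely different route, bypassing GRR entirely, would be to invoke the Atiyah-Singer index theorem for the Dolbeault operator $\pbar_{\cE}$ twisted by $\cE$: its analytic index is $\chi(X,\cE)$ while its topological index evaluates to $(ch(\cE)\cdot Td(X))_n$. This alternative is conceptually cleaner but imports the transcendental machinery of elliptic operators on compact complex manifolds, which seems out of proportion relative to the algebro-geometric tone of the rest of the paper.
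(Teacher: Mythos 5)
The paper does not prove this theorem: it is quoted as a black box from Fulton's \emph{Intersection Theory}, Corollary 15.2.1, and is used later only via Corollary \ref{HomogRR} to convert an Euler characteristic into an intersection number on the log resolution. So there is no in-paper proof to compare against. That said, your sketch is exactly the argument given in the cited reference: HRR is the specialization of GRR to the structural morphism $X\to\Spec\bC$, and GRR for a projective morphism of smooth varieties is established by factoring it as a closed immersion into a projective bundle followed by the projection, treating the projection by the projective-bundle formula and a direct verification on $\bP^N$, and treating the closed immersion by deformation to the normal cone together with the Koszul resolution, with $Td(N)^{-1}$ appearing as the correction factor. Your identification of the closed-immersion case as the hard step is accurate --- it is the bulk of Fulton's Chapter 15 --- and the Atiyah-Singer alternative is correct but, as you observe, out of keeping with the algebro-geometric framework the paper works in. In short: the proposal is a sound reconstruction of the standard proof from the source the paper cites, but the paper itself offers no proof at all, and none is needed for the paper's purposes.
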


\section{Hodge filtration for local systems}

Recall (e.g. from \cite{Di}) that a {\it complex local system}
$\cV$ on a complex manifold $X$ is a locally constant sheaf of
finite dimensional complex vector spaces. The rank of $\cV$ is the
dimension of a fiber of $\cV$. Local systems of rank one on $X$
are equivalent with representations $H_1(X,\bZ)\ra \bC^*$. Unitary
local systems of rank one on $X$ correspond to representations
$H_1(X,\bZ)\ra S^1$, where $S^1$ is the unit circle in $\bC$. For
a smooth variety $X$, local systems are defined on the
corresponding complex manifold.

\medskip

Let $X$ be a smooth   projective variety of dimension $n$. Let $D$
be a reduced effective divisor on $X$ with irreducible
decomposition $D=\cup_{i\in S} D_i$, for a finite set of indices
$S$. Let $U=X-D$ be the complement of $D$ in $X$. Rank one unitary
local systems on $U$ have the following geometric interpretation.
Define first the group of {\it realizations of boundaries} of $X$
on $D$
$$\Pic^\tau(X,D):=\left\{ (L,\alpha) \in\Pic(X)\times [0,1)^S :
c_1(L)=\sum_{i\in S}\alpha_i\cdot [D_i]\ \in H^2(X,\bR)\right\},$$
where the group operation is
$$(L_,\alpha)\cdot
(L',\alpha')=\left (L\otimes L'\otimes\cO_X(-
\rndown{\alpha+\alpha'}\cdot D)), \{\alpha+\alpha'\}\right ).$$
Here $\alpha\cdot D$ means the divisor $\sum_{i\in S}\alpha _i
D_i$, and $\rndown{.}$ (resp. $\{.\}$) is taking the round-down
(resp. fractional part) componentwise. Note that the inverse of
$(L,\alpha)$ is $(M,\beta)$ where
$M=L^\vee\otimes\cO_X(\sum_{\alpha_i\ne 0}D_i)$, and $\beta_i$ is
$0$ if $\alpha_i=0$ and is $1-\alpha_i$ otherwise.

\begin{thm}\label{ls}(\cite{Bu-ULS} - Theorem 1.2.) Let $X$ be a
smooth   projective variety, $D$ a divisor on $X$, and let
$U=X-D$. There is a natural canonical group isomorphism
$\Pic^\tau(X,D)\xrightarrow{\sim}\Hom (H_1(U,\bZ),S^1)$ between
realizations of boundaries of $X$ on $D$ and unitary local systems
of rank one on $U$.
\end{thm}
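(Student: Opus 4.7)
The plan is to pass through flat unitary line bundles on $U$, using Deligne's canonical extension as the bridge to line-bundle data on $X$ and the $\partial\bar\partial$-lemma as the bridge back.

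In one direction, a unitary character $\rho\colon H_1(U,\bZ)\to S^1$ is equivalent to a flat unitary holomorphic line bundle $(L^\circ,\nabla)$ on $U$; the local monodromy of $\nabla$ around each irreducible component $D_i$ is $e^{-2\pi i\alpha_i}$ for a unique $\alpha_i\in[0,1)$. I would apply Deligne's canonical extension to obtain a holomorphic line bundle $L$ on $X$ equipped with a logarithmic connection whose residue along $D_i$ is $\alpha_i$, and invoke the Chern-Weil formula for logarithmic connections (Esnault-Viehweg, Ohtsuki) to conclude
\begin{equation*}
c_1(L)=\sum_{i\in S}\alpha_i[D_i]\quad \text{in }\ H^2(X,\bR),
\end{equation*}
yielding a map $\Psi\colon\Hom(H_1(U,\bZ),S^1)\to\Pic^\tau(X,D)$.

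In the opposite direction, starting from $(L,\alpha)\in\Pic^\tau(X,D)$, I would equip $L|_U$ with a flat unitary connection. Fix smooth hermitian metrics $h_0$ on $L$ and $h_i$ on $\cO_X(D_i)$, and let $s_i$ denote the defining section of $D_i$. The smooth real $(1,1)$-form
\begin{equation*}
\eta \;=\; c_1(L,h_0) \;-\; \sum_{i\in S}\alpha_i\, c_1(\cO_X(D_i),h_i)
\end{equation*}
on $X$ represents the class $c_1(L)-\sum_i\alpha_i[D_i]=0$, so by the $\partial\bar\partial$-lemma on the projective (hence K\"ahler) manifold $X$ there is $f\in C^\infty(X,\bR)$ with $\eta=i\partial\bar\partial f$. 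Then the singular metric
\begin{equation*}
h \;=\; h_0\cdot e^{-2f}\cdot\prod_{i\in S}|s_i|_{h_i}^{-2\alpha_i}
\end{equation*}
on $L$ restricts to a smooth hermitian metric on $L|_U$ with vanishing Chern curvature, so its Chern connection is flat and unitary; a local computation shows that its monodromy around $D_i$ equals $e^{-2\pi i\alpha_i}$, producing $\Phi(L,\alpha)\in\Hom(H_1(U,\bZ),S^1)$.

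Mutual inverseness of $\Phi$ and $\Psi$ follows from the uniqueness of Deligne's canonical extension with residues in $[0,1)$. Compatibility with the group laws amounts to the observation that tensoring unitary local systems corresponds to tensoring Deligne extensions and adding residues componentwise; whenever $\alpha_i+\alpha'_i\geq 1$, the twist by $\cO_X(-\rndown{\alpha+\alpha'}\cdot D)$ in the definition of the group law on $\Pic^\tau(X,D)$ is precisely the correction needed to return the residues to $[0,1)$. The main technical subtlety is the Chern-Weil identification $c_1(L)=\sum_i\alpha_i[D_i]$ for the Deligne extension, which must treat the logarithmic poles of the connection delicately, ideally by pulling back to a log resolution of $(X,D)$; dually, in the reverse direction, verifying that the parallel transport of the singular metric yields exactly the monodromy $e^{-2\pi i\alpha_i}$ requires a careful local analysis near the (possibly non-normal-crossing) divisor $D$.
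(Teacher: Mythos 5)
Note first that this paper does not prove Theorem~\ref{ls}: it cites it from \cite{Bu-ULS}, and what the present text supplies is only the shape of the argument, through Proposition~\ref{comparison} (reduction to the simple-normal-crossings case via a log resolution and the isomorphism $\Pic^\tau(X,D)\cong\Pic^\tau(Y,E)$) and Lemma~\ref{relationboundaries} (the explicit dictionary with Deligne canonical extensions in the SNC case). Your overall framework --- canonical extension plus a Chern--Weil residue formula in one direction, $\partial\bar\partial$-lemma plus a singular flat metric in the other --- is plausibly the same route as the cited source, and the reduction to the SNC case that you relegate to a closing remark should really be the organizing step of the argument.

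However, the pivotal computation as you state it is wrong. With your conventions (monodromy $e^{-2\pi i\alpha_i}$ around $D_i$ and canonical extension $L$ with residues $\alpha_i\in[0,1)$), the logarithmic Chern--Weil formula yields $c_1(L)=-\sum_i\alpha_i[D_i]$, not $+\sum_i\alpha_i[D_i]$, so the pair $(L,\alpha)$ does not lie in $\Pic^\tau(X,D)$. One can already see this on $\bP^1$ with $D=\{0\}+\{\infty\}$: for the local system with monodromy $e^{2\pi i\alpha}$, $\alpha\in(0,1)$, around $0$, the $[0,1)$-normalized Deligne extension is $\cO(-1)$, while the element of $\Pic^\tau(\bP^1,D)$ is $(\cO(1),(\alpha,1-\alpha))$. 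The correct dictionary is precisely the one recorded in this paper's Lemma~\ref{relationboundaries}: with monodromy taken to be $e^{+2\pi i\alpha_i}$, the line bundle of the realization of boundaries is $\overline{\cV}\otimes\cO_X\bigl(\sum_{\alpha_i\ne 0}D_i\bigr)$, i.e.\ the canonical extension \emph{twisted} by the reduced divisor of components with nontrivial monodromy (equivalently, the Deligne extension normalized to have residues in $(-1,0]$). It is exactly this twist that produces $c_1=\sum_i\alpha_i[D_i]$ and makes the inverse in $\Pic^\tau$ match the dual local system. A secondary point: in the reverse direction your $\partial\bar\partial$-lemma step is off by a normalization constant --- with $\eta=i\partial\bar\partial f$ the curvature of $h_0 e^{-2f}\prod_i|s_i|_{h_i}^{-2\alpha_i}$ on $U$ is a nonzero multiple of $\partial\bar\partial f$, not zero --- but this is harmless since one may rescale $f$.
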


Fix a log resolution $\mu:Y\ra X$ of $(X,D)$ which is an
isomorphism above $U$. Let $E=Y-U$ with irreducible decomposition
$E=\cup_{j\in S'}E_j$.

\begin{prop}\label{comparison}(\cite{Bu-ULS} - Proposition 3.3.)
The map $\Pic^\tau(X,D) \ra\Pic^\tau(Y,E)$ given by $(L,\alpha)
\mapsto (\mu^*L-\rndown{e}\cdot E, \{e\})$ is an isomorphism,
where $e\in \bR^{S'}$ is given by $\mu^*(\alpha\cdot D)=e\cdot E$.
\end{prop}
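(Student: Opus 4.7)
The plan is to verify (i) well-definedness of the map, (ii) that it is a group homomorphism, and (iii) that it is bijective. The key identity driving everything is $\mu^*(\alpha\cdot D)=e\cdot E$ by the very definition of $e$. Pulling back the Chern-class relation $c_1(L)=\sum_{i\in S}\alpha_i[D_i]$ in $\Pic^\tau(X,D)$ immediately gives $c_1(\mu^*L)=\sum_{j\in S'}e_j[E_j]$ in $H^2(Y,\bR)$, and subtracting $\rndown{e}\cdot[E]$ from both sides yields
\[
c_1\!\left(\mu^*L-\rndown{e}\cdot E\right)=\msum_{j\in S'}\{e_j\}\,[E_j],
\]
so that, combined with $\{e\}\in[0,1)^{S'}$, the image lies in $\Pic^\tau(Y,E)$.

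Checking the homomorphism property is bookkeeping based on the pointwise identity $\rndown{a}+\rndown{b}+\rndown{\{a\}+\{b\}}=\rndown{a+b}$. Expanding the source product $(L,\alpha)\cdot(L',\alpha')$, applying the map, and noting that the pullback of the integral divisor $\rndown{\alpha+\alpha'}\cdot D$ contributes only to the integer part of $e+e'$, one arrives at
\[
\left(\mu^*L\otimes\mu^*L'\otimes\cO_Y(-\rndown{e+e'}\cdot E),\;\{e+e'\}\right),
\]
which equals the product of the individual images.

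For injectivity, suppose $(L,\alpha)$ maps to the identity. Then $\{e\}=0$, so every $e_j\in\bZ$. For each $i\in S$ let $E_{j(i)}$ denote the strict transform of $D_i$; then $\mu^*D_k$ has coefficient $\delta_{ik}$ along $E_{j(i)}$, so $e_{j(i)}=\alpha_i\in[0,1)\cap\bZ=\{0\}$. Hence $\alpha=0$, $e=0$, and $\mu^*L\cong\cO_Y$. Since $\mu$ is birational between smooth varieties we have $\mu_*\cO_Y=\cO_X$, and the projection formula gives $L\cong\mu_*\mu^*L\cong\cO_X$.

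Surjectivity is the main obstacle. The cleanest route is to invoke Theorem \ref{ls} on both pairs: since $\mu$ restricts to the identity on $U=X-D=Y-E$, both $\Pic^\tau(X,D)$ and $\Pic^\tau(Y,E)$ are canonically isomorphic to the same compact abelian Lie group $\Hom(H_1(U,\bZ),S^1)$, so they share the same real dimension and the same group of connected components. An injective continuous homomorphism between compact abelian Lie groups with matching dimension and matching $\pi_0$ is automatically surjective: the closed image has identity component of full dimension, which must then coincide with the target identity component, while equality of the $\pi_0$'s forces equality at the level of cosets. A more direct alternative would construct an explicit inverse by setting $\alpha_i=\beta_{j(i)}$ and then verifying, via the Chern-class condition and the $\bQ$-linear independence of exceptional divisor classes in $H^2(Y,\bQ)$, that $\beta_j=\{e_j\}$ also holds on exceptional components, recovering $L$ from $M+\rndown{e}\cdot E$ through the splitting $\Pic(Y)\cong\mu^*\Pic(X)\oplus\bigoplus_{j\ \text{exc.}}\bZ[E_j]$; this route is more delicate and ultimately reduces to the same compact Lie group fact.
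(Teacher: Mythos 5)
The paper does not prove this proposition; it is quoted verbatim from \cite{Bu-ULS}, so there is no in-paper proof to compare against. Judged on its own, your argument is correct in parts (i)--(iii): the pullback of the Chern-class relation gives well-definedness, the floor identity $\rndown{a}+\rndown{b}+\rndown{\{a\}+\{b\}}=\rndown{a+b}$ gives the homomorphism property once one notes that $e''=e+e'-f$ with $f\cdot E=\mu^*(\rndown{\alpha+\alpha'}\cdot D)$ an integral divisor, and your injectivity argument via the strict transforms $E_{j(i)}$ and $\mu_*\cO_Y=\cO_X$ is fine.

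The surjectivity argument as your ``cleanest route'' is the one step I would flag. Invoking Theorem \ref{ls} simultaneously for $(X,D)$ and $(Y,E)$ is very likely circular: in \cite{Bu-ULS} the isomorphism $\Pic^\tau(X,D)\cong\Hom(H_1(U,\bZ),S^1)$ for a general divisor $D$ is itself obtained by reducing to the simple normal crossings pair $(Y,E)$, and the reduction step is exactly Proposition 3.3. (This is visible in the present paper: Lemma \ref{relationboundaries} quotes ``\cite{Bu-ULS}-Proof of Theorem 1.2'' and concerns precisely the snc case.) Beyond circularity, the Lie-group argument also needs the comparison map to be continuous for the topologies making both sides compact Lie groups, which again is something one gets only after establishing the very isomorphism one is trying to prove.

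Your ``more direct alternative'' is in fact the right proof, and contrary to what you say, it does not reduce to the Lie-group fact; it is entirely algebraic and self-contained. Use the decomposition $\Pic(Y)\cong\mu^*\Pic(X)\oplus\bigoplus_{j\ \text{exc}}\bZ E_j$ (and the analogue for $H^2(\cdot,\bR)$). Given $(M,\beta)\in\Pic^\tau(Y,E)$, set $\alpha_i:=\beta_{j(i)}$ and let $e$ be defined by $\mu^*(\alpha\cdot D)=e\cdot E$. Writing $E_{j(i)}=\mu^*D_i-\sum_{j\ \text{exc}}c_{ij}E_j$, one gets $e_{j(i)}=\alpha_i$ and, for exceptional $j$, $e_j=\sum_i\alpha_i c_{ij}$. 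Plugging $c_1(M)=\sum_j\beta_j[E_j]$ into the direct sum shows that the coefficient of each exceptional $[E_j]$ in $c_1(M)$ is $\beta_j-e_j$, which must be an integer since $M\in\Pic(Y)$; as $\beta_j\in[0,1)$ this forces $\beta_j=\{e_j\}$. The same computation shows the exceptional coefficient of $M$ in $\Pic(Y)$ is $-\rndown{e_j}$, so $M+\rndown{e}\cdot E=\mu^*L$ for a well-defined $L\in\Pic(X)$, and $c_1(L)=\sum_i\alpha_i[D_i]$ follows because $\mu^*$ is injective on $H^2$. This produces the explicit two-sided inverse, completing the proof without any appeal to Theorem \ref{ls}. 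I would replace your surjectivity paragraph with this argument and drop the Lie-group detour.
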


For a unitary local system $\cV$ on $U$, denote by
$\overline{\cV}$ the vector bundle on $Y$ given by the canonical
Deligne extension of $\cV$ to $Y$ (see \cite{De}). The relation
between canonical Deligne extensions and realizations of
boundaries, and the explicit isomorphism of Theorem \ref{ls} in
the case of the complement of a simple normal crossings divisor is
the following:

\begin{lem}\label{relationboundaries}(\cite{Bu-ULS}-Proof of Theorem 1.2 and Remark 8.2
(a).) With the notation as in Proposition \ref{comparison}, let
$\cV\in\Hom (H_1(U,\bZ),S^1)$ be a rank one unitary local system
on $U$. Then $\cV$ corresponds to $(M,\beta)\in\Pic^\tau(Y,E)$
where $M=\overline{\cV}\otimes\cO_Y(\sum_{\beta_j\ne 0}E_j)$ and
$\beta_j\in [0,1)$ is such that the monodromy of $\cV$ around a
general point of $E_j$ is multiplication by $\exp(2\pi i\beta_j)$.
\end{lem}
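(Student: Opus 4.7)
The plan is to unpack the construction of the isomorphism of Theorem \ref{ls} from \cite{Bu-ULS} and apply it directly to the simple normal crossings pair $(Y,E)$. Since $\mu$ is an isomorphism above $U$, we have $U = Y - E$, so $\cV$ is naturally a unitary rank-one local system on the complement of an SNC divisor. The isomorphism of Theorem \ref{ls} is built in \cite{Bu-ULS} by means of Deligne's canonical extension, so in the SNC setting all the relevant ingredients are present and the lemma amounts to an explicit identification.

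First I pin down $\beta_j\in [0,1)$ by the unique choice for which the monodromy of $\cV$ around a general point of $E_j$ equals $\exp(2\pi i \beta_j)$; this is forced on us by the second half of the conclusion. Next I invoke Deligne's canonical extension (\cite{De}): the unique locally free extension of $\cV\otimes_{\bC}\cO_U$ carrying a logarithmic connection whose residue along $E_j$ is a scalar $\rho_j\in [0,1)$. The standard correspondence between monodromy and residue reads $\exp(-2\pi i \rho_j)=\exp(2\pi i \beta_j)$, which forces $\rho_j=0$ when $\beta_j=0$ and $\rho_j=1-\beta_j$ otherwise.

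Then I apply the standard formula $c_1(\overline{\cV})=-\sum_j \rho_j [E_j]$ in $H^2(Y,\bR)$ for the first Chern class of the canonical Deligne extension of a unitary rank-one local system. Substituting the values of $\rho_j$ yields $c_1(\overline{\cV})=\sum_j \beta_j [E_j]-\sum_{\beta_j\ne 0}[E_j]$. Setting $M:=\overline{\cV}\otimes\cO_Y(\sum_{\beta_j\ne 0}E_j)$ cancels the integral summand, so $c_1(M)=\sum_j \beta_j [E_j]$. This shows $(M,\beta)\in\Pic^\tau(Y,E)$.

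Finally, I verify that this recipe is exactly the isomorphism of Theorem \ref{ls} for $(Y,E)$. This is the core content of the construction in \cite{Bu-ULS}: the isomorphism is engineered so that, starting from $\cV$, one records the data $(\overline{\cV},\rho)$ of Deligne's extension and then applies the integral shift by $\sum_{\beta_j\ne 0}E_j$ to land in the representative with fractional coordinates in $[0,1)^{S'}$; this reproduces the pair $(M,\beta)$ described in the statement. The main obstacle is keeping the sign and normalization conventions straight: one must fix, once and for all, which eigenvalue of the residue corresponds to which monodromy character, and one must verify that the shift by the integral divisor $\sum_{\beta_j\ne 0}E_j$ is precisely what converts the "upper" fractional representative $\rho_j\in [0,1)$ coming from Deligne's extension into the representative $\beta_j=1-\rho_j$ demanded by the definition of $\Pic^\tau$ in the nontrivial monodromy directions. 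Once these conventions are pinned down, the remainder of the argument is bookkeeping against the definitions in \cite{Bu-ULS}.
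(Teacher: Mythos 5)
The paper does not prove this lemma; it is stated as a direct citation to \cite{Bu-ULS} (Proof of Theorem 1.2 and Remark 8.2(a)), so there is no in-paper argument against which to compare your proposal, and any acceptable "proof" here is really a transcription of the cited construction.

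On the merits of your reconstruction: the computational core is correct. With the normalization that the canonical Deligne extension has residues $\rho_j\in[0,1)$ along $E_j$, the monodromy--residue dictionary does give $\exp(-2\pi i\rho_j)=\exp(2\pi i\beta_j)$, hence $\rho_j=1-\beta_j$ when $\beta_j\ne 0$ and $\rho_j=0$ otherwise, and the Chern class formula $c_1(\overline{\cV})=-\sum_j\rho_j[E_j]$ is the standard one (cf.\ \cite{EV}); together these verify $c_1(M)=\sum_j\beta_j[E_j]$, i.e.\ $(M,\beta)\in\Pic^\tau(Y,E)$.

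The weak point is the final paragraph. Establishing that $(M,\beta)$ satisfies the Chern-class constraint defining $\Pic^\tau(Y,E)$ is strictly weaker than establishing that $(M,\beta)$ is the image of $\cV$ under the specific isomorphism of Theorem~\ref{ls}: for the fixed $\beta$ determined by the monodromies, any twist of $M$ by a line bundle with $c_1=0$ in $H^2(Y,\bR)$ would also lie in $\Pic^\tau(Y,E)$, so the computation does not single out $M=\overline{\cV}\otimes\cO_Y(\sum_{\beta_j\ne 0}E_j)$. Your closing sentence ("the isomorphism is engineered so that\ldots") is precisely the content of the cited passage in \cite{Bu-ULS}, not something your argument derives; as written it restates the claim rather than proves it. Since the isomorphism in Theorem~\ref{ls} is a black box in this paper, this last step genuinely does require opening \cite{Bu-ULS}, either to check that the explicit inverse $\Pic^\tau(Y,E)\to\Hom(H_1(U,\bZ),S^1)$ sends $(M,\beta)$ back to $\cV$, or to match your recipe against the construction of the forward map. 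Flagging this and the sign conventions as the obstacles was the right instinct; the proposal just stops short of resolving them.
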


Unitary local systems admit a canonical Hodge filtration $F$ on
cohomology such that:

\begin{thm}\label{Hodgeforlocsys}(\cite{Ti}-2nd Theorem, part
(a).) With notation as in Proposition \ref{comparison}, let $\cV$
be a unitary local system on $U$.  Then $$\dim
\Gr_F^pH^{p+q}(U,\cV) = h^q(Y,\Omega_Y^p(\log E)\otimes
\overline{\cV} ).$$
\end{thm}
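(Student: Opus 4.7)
The approach mirrors Deligne's proof of the analogous statement for the constant local system, substituting $\Omega_Y^\bullet(\log E)$ with its twist by the canonical Deligne extension $\overline{\cV}$. The first step is to assemble the logarithmic de Rham complex $(\Omega_Y^\bullet(\log E)\otimes \overline{\cV}, \nabla)$, where $\nabla$ is the flat logarithmic connection on $\overline{\cV}$ whose residue along $E_j$ has eigenvalue $\beta_j\in [0,1)$, with the $\beta_j$ determined from $\cV$ via Lemma \ref{relationboundaries}.

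The second step is to produce a quasi-isomorphism $\bR j_*\cV \simeq (\Omega_Y^\bullet(\log E)\otimes \overline{\cV}, \nabla)$, where $j:U\hookrightarrow Y$ is the open inclusion. This is a local computation on a polydisc chart around a crossing point where $E=\{z_1\cdots z_k=0\}$: one writes the twisted de Rham complex as a Koszul complex in the logarithmic derivations $z_i\partial_{z_i}$ and uses that the eigenvalues $\beta_j\in[0,1)$ act invertibly on the relevant Koszul blocks, while the $\beta_j=0$ blocks recover exactly the constant-sheaf contribution that is $j_*\cV$ locally; this is the step where the normalization of the canonical Deligne extension (residues in $[0,1)$) is essential. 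Taking hypercohomology gives $H^m(U,\cV)=\bH^m(Y,\Omega_Y^\bullet(\log E)\otimes \overline{\cV})$. Equipping the right-hand side with the stupid filtration $F^p:=\Omega_Y^{\geq p}(\log E)\otimes \overline{\cV}$ produces the Hodge-to-de Rham spectral sequence
\[
E_1^{p,q} \;=\; H^q\bigl(Y,\Omega_Y^p(\log E)\otimes \overline{\cV}\bigr) \;\Longrightarrow\; H^{p+q}(U,\cV).
\]

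The third step, which is the heart of the matter and the main obstacle, is $E_1$-degeneration of this spectral sequence: once this is established, the claimed identity $\dim \Gr_F^p H^{p+q}(U,\cV)=h^q(Y,\Omega_Y^p(\log E)\otimes \overline{\cV})$ follows by reading off the $E_1$-page. My plan is to argue by harmonic theory, exploiting the unitary hypothesis. The unitary structure on $\cV$ provides a flat Hermitian metric on $\overline{\cV}$, and equipping $U$ with a complete Kähler metric of Poincaré type along $E$ allows one to represent classes in $H^*(U,\cV)$ by $L^2$-harmonic $\cV$-valued forms. Identifying the space of $L^2$-harmonic $(p,q)$-forms with $H^q(Y,\Omega_Y^p(\log E)\otimes \overline{\cV})$ and invoking the standard $(p,q)$-decomposition of harmonic forms then forces degeneration, in the manner of the Zucker-type arguments for $L^2$-cohomology with unitary coefficients. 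An alternative route is to embed the statement in Saito's theory of mixed Hodge modules, where the result becomes a special case of strictness of the Hodge filtration for polarizable variations of Hodge structure of weight zero; yet another is to pass to a finite cyclic étale cover on which $\cV$ trivializes when its monodromy is of finite order, apply Deligne's classical degeneration upstairs, and descend via the canonical projector, handling the general unitary case by a limit argument.
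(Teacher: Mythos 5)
The paper offers no proof of this result; it is quoted verbatim from Timmerscheidt \cite{Ti} (2nd Theorem, part (a)), so there is no in-text argument to match yours against. That said, your proposal is a faithful reconstruction of how the theorem is in fact established, and your first route for $E_1$-degeneration --- harmonic representatives for a complete K\"ahler metric of Poincar\'e type on $U$, adapted to the flat Hermitian metric that the unitary structure provides on $\overline{\cV}$, in the spirit of Zucker's $L^2$-cohomology --- is essentially Timmerscheidt's own method. Steps 1 and 2 (the twisted logarithmic de Rham complex and the quasi-isomorphism $Rj_*\cV \simeq \Omega_Y^\bullet(\log E)\otimes\overline{\cV}$ via the local Koszul computation) are standard and correctly identified; in particular you are right that the $[0,1)$ normalization of the residues is exactly what singles out $Rj_*$ rather than $j_!$, so that the hypercohomology of the twisted log complex computes $H^*(U,\cV)$.

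Of the alternative routes you float, the mixed-Hodge-module route is correct but historically posterior, and the finite-cover route needs a caveat. When the monodromy of $\cV$ has finite order one can indeed pass to a Kummer cover of $Y$ ramified along $E$ on which $\cV$ pulls back to a trivial local system, apply Deligne's classical degeneration there, and project to the isotypic summand; this in fact already covers the application in the present paper, since the local systems $\cV_k$ attached to the Milnor fiber of a homogeneous polynomial have monodromy through $d$-th roots of unity. But the ``limit argument'' you invoke to pass from finite-order to arbitrary unitary monodromy is not automatic: both sides of the desired equality --- the dimensions $h^q(Y,\Omega_Y^p(\log E)\otimes\overline{\cV})$ and $\dim H^{m}(U,\cV)$ --- can jump under specialization along the character variety (along translated subtori, resonance loci, etc.), so degeneration on a dense set of torsion points does not formally propagate to the whole unitary locus. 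Making that route rigorous requires a separate semicontinuity-plus-Euler-characteristic argument, which is one reason the published proofs (Timmerscheidt's included) go through $L^2$-harmonic theory instead.
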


The main result of this section describes the pieces of the Hodge
filtration on the cohomology of unitary rank one local systems on
complements to arbitrary divisors. It generalizes \cite{Bu-ULS} -
Proposition 6.4 and part of \cite{DS}- Theorem 2. It is also related to \cite{Bu-ULS}- Theorems 1.3, 1.4, and \cite{L07}- Theorem 2.1.

\begin{thm}\label{mainThmsectiononLocSys}  Let $X$ be a smooth
projective variety of dimension $n$, $D$ a divisor on $X$, and
$U=X-D$. Let $\cV\in \Hom (H_1(U,\bZ),S^1)$ be a rank one unitary
local system on $U$ corresponding to $(L,\alpha)\in \Pic^\tau
(X,D)$. Let $\mu :(Y,E)\ra (X,D)$ be a log resolution which is an
isomorphism above $U$. Then:

(a) $$\dim \Gr_F^pH^{p+q}(U,\cV^\vee) =$$
$$= h^{n-q}(Y,\Omega_Y^p(\log E)^\vee\otimes\omega_Y\otimes\mu^*
L\otimes\cO_Y(-\rndown{\mu^*(\alpha\cdot D)}));$$

(b) if $\alpha _i\ne 0$ for all $i\in S$,
$$\dim \Gr_F^pH^{p+q}(U,\cV) = $$
$$= h^q(Y, \Omega_Y^{n-p}(\log E)^\vee\otimes \omega_Y\otimes\mu^*
 L\otimes\cO_Y(-\rndown{\mu^*((\alpha-\epsilon)\cdot D)} )),$$
for all $0<\epsilon\ll 1$.

In particular, in terms of multiplier ideals:

(c) $\dim \Gr_F^0H^q(U,\cV^\vee) = h^{n-q}(X,\omega_X\otimes
L\otimes\cJ(\alpha\cdot D));$

(d) if $\alpha _i\ne 0$ for all $i\in S$, for all $0<\epsilon\ll
1$,
$$\dim \Gr_F^nH^{n+q}(U,\cV) = h^q(X,\omega_X\otimes L\otimes\cJ((\alpha-\epsilon)\cdot
D)),$$ which is $0$ if $q\ne 0$.

\end{thm}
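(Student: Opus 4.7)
The plan is to reduce to the log resolution $(Y,E)$, apply Timmerscheidt's Theorem \ref{Hodgeforlocsys}, identify the canonical Deligne extensions via Proposition \ref{comparison} and Lemma \ref{relationboundaries}, and then convert to the stated forms using Serre duality on $Y$ (for part (a)) or the wedge--duality identity $\Omega_Y^{n-p}(\log E)^\vee\otimes\omega_Y\cong \Omega_Y^p(\log E)\otimes\cO_Y(-E)$ (for part (b)). Parts (c) and (d) then fall out by specializing to $p=0$ and $p=n$, respectively, and pushing forward from $Y$ to $X$ using Theorem \ref{vanishing} together with the projection formula.

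For (a), since $\mu$ is an isomorphism above $U$, Theorem \ref{Hodgeforlocsys} applied on $(Y,E)$ gives $\dim \Gr_F^p H^{p+q}(U,\cV^\vee) = h^q(Y,\Omega_Y^p(\log E)\otimes\overline{\cV^\vee})$. Write $\mu^*(\alpha\cdot D)=e\cdot E$. By Proposition \ref{comparison} the image of $(L,\alpha)$ in $\Pic^\tau(Y,E)$ is $(M,\beta):=(\mu^*L\otimes\cO_Y(-\rndown{e}\cdot E),\{e\})$, and Lemma \ref{relationboundaries} gives $\overline{\cV}=M\otimes\cO_Y(-\sum_{\beta_j\ne 0}E_j)$. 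Forming the inverse of $(M,\beta)$ in $\Pic^\tau(Y,E)$ to obtain the class corresponding to $\cV^\vee$ and then reapplying Lemma \ref{relationboundaries}, the two $\sum_{\beta_j\ne 0}E_j$ summands cancel and leave $\overline{\cV^\vee}=M^\vee=(\mu^*L)^\vee\otimes\cO_Y(\rndown{\mu^*(\alpha\cdot D)})$. Substituting and applying Serre duality on the smooth projective $Y$ of dimension $n$ produces the formula in (a).

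For (b), Theorem \ref{Hodgeforlocsys} applied directly to $\cV$ gives $\dim\Gr_F^pH^{p+q}(U,\cV)=h^q(Y,\Omega_Y^p(\log E)\otimes\overline{\cV})$, and the identification above yields $\overline{\cV}=\mu^*L\otimes\cO_Y(-\rndown{e}\cdot E-\sum_{\{e_j\}\ne 0}E_j)$. Using the wedge--duality identity, the claimed right-hand side is rewritten in $\Omega_Y^p(\log E)$-form, and matching the two divisor twists reduces to the componentwise identity $1+\rndown{e_j-\epsilon'_j}=\rndown{e_j}+\mathbf{1}_{\{e_j\}\ne 0}$, which one verifies by a short case split on whether $e_j$ is zero, a positive integer, or a positive non-integer. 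The hypothesis $\alpha_i\ne 0$ is used here to keep $(\alpha-\epsilon)\cdot D$ in the effective regime so that the round-downs and the multiplier ideal in (d) retain their standard meaning.

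For (c) and (d), set $p=0$ in (a) and $p=n$ in (b), so that $\Omega_Y^0(\log E)^\vee=\cO_Y$. In both cases the right-hand side becomes $h^{*}(Y,\omega_Y\otimes\mu^*L\otimes\cO_Y(-\rndown{\mu^*(\alpha'\cdot D)}))$, with $\alpha'=\alpha$ in (c) and $\alpha'=\alpha-\epsilon$ in (d). The local vanishing in Theorem \ref{vanishing} gives $R^j\mu_*\cO_Y(K_{Y/X}-\rndown{\mu^*(\alpha'\cdot D)})=0$ for $j>0$, the projection formula identifies $\mu_*$ of this (appropriately twisted) sheaf with $\omega_X\otimes L\otimes\cJ(\alpha'\cdot D)$, and the Leray spectral sequence collapses to give the multiplier-ideal formulas. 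The main obstacle is the vanishing $H^q=0$ for $q\ne 0$ in (d): since $c_1(L)=\alpha\cdot[D]$ in $H^2(X,\bR)$, the numerical class of $L-(\alpha-\epsilon)\cdot D$ is $\epsilon\cdot D$, and verifying the nef-and-big hypothesis of the global vanishing half of Theorem \ref{vanishing} at this final step is what requires the most care.
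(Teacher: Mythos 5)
Your proof follows the paper's argument step for step: Timmerscheidt's formula (Theorem \ref{Hodgeforlocsys}) on the log resolution, identification of $\overline{\cV^\vee}$ and $\overline{\cV}$ via Proposition \ref{comparison} and Lemma \ref{relationboundaries}, Serre duality for (a), the logarithmic wedge duality $\Omega_Y^p(\log E)\cong\Omega_Y^{n-p}(\log E)^\vee\otimes\omega_Y\otimes\cO_Y(E)$ together with the round-down identity $-\rndown{e_j}+\mathbf{1}_{\{e_j\}=0}=-\rndown{e_j-\epsilon'_j}$ (valid because $e_j>0$ under the hypothesis $\alpha_i\ne 0$) for (b), and then local vanishing, projection formula, and the degeneration of the Leray spectral sequence for (c)--(d). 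This is the paper's proof.

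One small remark on your last sentence: the paper itself disposes of the $q\ne 0$ vanishing in (d) with a one-line citation to the second half of Theorem \ref{vanishing} (Nadel vanishing), which, exactly as you flag, tacitly requires that $L-(\alpha-\epsilon)\cdot D\equiv\epsilon\cdot D$ be nef and big. That condition is automatic in the paper's actual application ($X=\bP^{n-1}$, where any nonzero effective divisor is ample), but it is not addressed in the generality of the statement, so you are right that this is the one point where the proof as written leans on an unstated positivity assumption.
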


\medskip

\begin{pf} (a) By Theorem \ref{Hodgeforlocsys},
$$\dim \Gr_F^pH^{p+q}(U,\cV^\vee) = h^q(Y,\Omega_Y^p(\log
E)\otimes \overline{\cV^\vee} ).$$ By Proposition
\ref{comparison}, $\cV$ corresponds to
$(\mu^*L\otimes\cO_Y(-\rndown{\mu^*(\alpha\cdot D)}),\beta)$ in
$\Pic^\tau(Y,E)$, where $\beta_j$ is the fractional part of the
coefficient of $E_j$ in $\mu^*(\alpha\cdot D)$. By calculating the
inverse in the group $\Pic^\tau(Y,E)$, the dual local system
$\cV^\vee$ corresponds to
$(\mu^*L^\vee\otimes\cO_Y(\rndown{\mu^*(\alpha\cdot
D)}+\sum_{\beta_j\ne 0}E_j),\gamma)$ in $\Pic^\tau(Y,E)$, where
$\gamma_j$ is $0$ if $\beta_j=0$ and is $1-\beta_j$ if $\beta_j\ne
0$. Hence, by Lemma \ref{relationboundaries},
\begin{align*}
\overline{\cV^\vee}
&=\mu^*L^\vee\otimes\cO_Y(\rndown{\mu^*(\alpha\cdot
D)}+\sum_{\beta_j\ne 0}E_j -\sum_{\gamma_j\ne 0}E_j)\\
&=\mu^*L^\vee\otimes\cO_Y(\rndown{\mu^*(\alpha\cdot D)}).
\end{align*}
The conclusion follows by Serre duality.

(b) We have
$$\dim \Gr_F^pH^{p+q}(U,\cV) =
h^q(Y,\Omega_Y^p(\log E)\otimes \overline{\cV} ).$$ By Lemma
\ref{relationboundaries},
$\overline{\cV}=\mu^*L\otimes\cO_Y(-\rndown{\mu^*(\alpha\cdot
D)}-\sum_{\beta_j\ne 0}E_j )$. Also, we use the isomorphism
$\Omega_Y^p(\log E)\cong \Omega_Y^{n-p}(\log
E)^\vee\otimes\omega_Y\otimes\cO_Y(\sum_jE_j)$ (see \cite{EV}-6.8
(b)). If $\alpha_i\ne 0$ for all $i$, that is if $\cV$ is not the
restriction to $U$ of a local system over a larger open subset of
$X$, then the coefficients of $E_j$ in $\mu^*(\alpha\cdot D)$ are
nonzero. Hence
$$-\rndown{\mu^*(\alpha\cdot D)}+\sum_{\beta_j=0}E_j = -\rndown{\mu^*((\alpha-\epsilon)\cdot D))}, $$
for all $0<\epsilon\ll 1$. The conclusion follows.

(c) We let $p=0$ in (a) and use the definition of multiplier
ideals. The identification of $H^{n-q}$ of the $\cO_Y$-module from
(a) with $H^{n-q}$ of the $\cO_X$-module $\omega_X\otimes
L\otimes\cJ(\alpha\cdot D)$ is due to the triviality of the Leray
spectral sequence that follows from the projection formula
(\cite{Ha} -III.8 Ex. 8.3) and the first part of Theorem
\ref{vanishing}.

(d) We let $p=n$ in (b), then proceed as in (c). The vanishing for
$q\ne 0$ follows from the second part of Theorem \ref{vanishing}.
$\ \ \ \ \ \square$

\end{pf}

\begin{rem}\label{genofDimcaSaito} (i) Part (a) of Theorem
\ref{mainThmsectiononLocSys} generalizes part of \cite{DS}-
Theorem 2. They proved that when $X=\bP^n$ and $D$ is a
hypersurface in $X$,
$$F^nH^n(U,\bC)=H^0(X,V^0(\omega_X(D))).$$ The sheaf
$V^0(\omega_X(D))$ has a description in terms of multiplier ideals
via a result of \cite{BS}. More precisely,
$$V^0(\omega_X(D)) = \omega_X\otimes\cO_X(D)\otimes \cJ((1-\epsilon)D),$$
for all $0<\epsilon\ll 1$. To see that this indeed follows from
part (a) of Theorem \ref{mainThmsectiononLocSys}, we have
isomorphisms:
$$ F^nH^n(U,\bC) = \Gr_F^nH^n(U,\bC) =$$
$$ = H^n(Y,\Omega_Y^n(\log E)^\vee\otimes\omega_Y) =
H^0(Y,\omega_Y\otimes\cO_Y(E)) =$$
$$= H^0(Y,\omega_Y\otimes\mu^*\cO_X(D)\otimes\cO_Y(\rndown{\mu^*((1-\epsilon)D)})) =$$
$$= H^0(X,V^0(\omega_X(D))).$$

(ii) Examples of Hodge numbers  of local systems can be found in \cite{Bu-ULS}- Example 6.6, \cite{L01}, \cite{L07}- Section 6.

\end{rem}

\section{Milnor fibers and local systems}

We recall first how the cohomology of the Milnor fiber of a
homogeneous polynomial can be understood in terms of local
systems. Then we apply the result of the previous section to
reduce the computation of the spectrum of a homogeneous polynomial
to intersection theory on a log resolution.

\medskip

Let $f\in\bC[x_1,\ldots ,x_n]$ be a homogeneous polynomial of
degree $d$. Let $f=\prod_{i\in S}f_i^{m_i}$ be the irreducible
decomposition of $f$, and $d_i$ be the degree of $f_i$. Denote by
$D$ (resp. $D_i$) the hypersurface defined by $f$ (resp. $f_i$) in
$X:=\bP^{n-1}$. Let $U=X-D$.

\medskip

The {\it global Milnor fiber} of $f$ is
$M:=f^{-1}(1)\subset\bC^n$. The {\it geometric monodromy} is the
map $h:M\ra M$ given by $a\mapsto e^{2\pi i/d}\cdot a$. It is
known that $H^i(M,\bC)=H^i(M_f,\bC)$, where $M_f$ is the Milnor
fiber of the germ of $f$ at $0\in\bC^n$, such that $h^*$ on
$H^i(M,\bC)$ corresponds to the monodromy action $T$ on
$H^i(M_f,\bC)$. Hence the monodromy $T$ is diagonalizable and the
eigenvalues are $d$-th roots of unity. See e.g. \cite{Di2}-p.72.
Also, the Hodge filtration $F$ on $H^i(M_f,\bC)$ is induced by the
one on $H^i(M,\bC)$. This fact seems well known to experts, and it
has been used for example in \cite{St89}- Theorem 6.1. However we
could not find a reference. Since a proof of this fact would not
be elementary and take us too far, we take as the definition of
$F$ the canonical Hodge filtration of $H^i(M,\bC)$ .

\medskip

The group $G:=<h>=\bZ/d\bZ$ acts on $M$ freely and the quotient
$M/G$ can be identified with $U$. Let $p:M\ra U$ be the covering
map. Write
$$p_*\bC_M =\oplus _{k=1}^{d} \cV_k,$$
where $\cV_k$ is the rank one unitary local system on $U$ given by
the $e^{-2\pi i k/d}$-eigenspaces of fibers of the local system
$p_*\bC_M$. Then, since $p$ is finite, by Leray spectral sequence
one has for $1\le k\le d$ (see also \cite{CS} -Theorem 1.6):
\begin{equation}\label{MilnFibLocSYs}
H^i(M,\bC)_{e^{-2\pi i k/d}}=H^i(U,\cV_k).
\end{equation}
This isomorphism preserves the Hodge filtration by the
functoriality of the Hodge filtration for unitary local systems
(see \cite{Ti}-\S 6). Thus, in the case of homogeneous
polynomials, the computation of the Hodge filtration on the
cohomology of the Milnor fiber is reduced to the computation of
the Hodge filtration on the cohomology of unitary rank one local
systems on the complement of the projective hypersurface.

\medskip

Next result is well known to experts. We give a proof since we
could not find a reference.

\begin{lem}\label{monodromyGoesAround} With notation as above, the monodromy of $\cV_k$ around a
general point of  $D_i$ is given by multiplication by $e^{2\pi
ikm_i/d}$.
\end{lem}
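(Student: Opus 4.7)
My plan is to reduce the lemma to computing the monodromy homomorphism $\rho \colon \pi_1(U) \to G = \langle h\rangle \cong \bZ/d\bZ$ of the covering $p \colon M \to U$ evaluated on the class of a small loop $\gamma_i$ around $D_i$. First I would recall that $p_*\bC_M$ is the local system associated via $\rho$ to the regular representation of $G$, and its decomposition into $h$-eigenlines produces the summands $\cV_k$. Concretely, if we lift $\gamma \in \pi_1(U, u_0)$ to a path in $M$ starting at $m_0 \in p^{-1}(u_0)$ and ending at $h^{s(\gamma)}(m_0)$, then $\rho(\gamma) = h^{s(\gamma)}$; and since $\cV_k$ is the $e^{-2\pi i k/d}$-eigenspace of $h$, the monodromy of $\cV_k$ along $\gamma$ equals $e^{-2\pi i k s(\gamma)/d}$. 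Thus it suffices to establish $s(\gamma_i) \equiv -m_i \pmod d$.

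For this I would pick a general smooth point $p_0 \in D_i$ and analytic local coordinates $(x_1, \ldots, x_{n-1})$ on $X = \bP^{n-1}$ near $p_0$, read from the affine chart $y_1 = 1$ of $\bC^n$, so that $D_i = \{x_1 = 0\}$ locally and the other $f_j$ are units near $p_0$; then the dehomogenization satisfies $\bar f(x) := f(1, x) = x_1^{m_i} u(x)$ with $u$ nowhere vanishing. Taking the small loop $\gamma_i(t) = (\epsilon e^{2\pi i t}, x_2^0, \ldots, x_{n-1}^0)$, the obvious lift $\tilde\gamma_i(t) := (1, \epsilon e^{2\pi i t}, x_2^0, \ldots, x_{n-1}^0)$ is a closed loop in $V := \bC^n \setminus \tilde D$, and $f(\tilde\gamma_i(t)) = \epsilon^{m_i} e^{2\pi i m_i t} u(\cdots)$ has winding number exactly $m_i$ around $0 \in \bC^*$. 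To promote this to a lift in $M$, I would rescale: set $a(t) := \tilde\gamma_i(t)/\phi(t)$, where $\phi$ is a continuous branch of $f(\tilde\gamma_i(\cdot))^{1/d}$; then $a(t) \in M$ for all $t$ because $f(a(t)) = f(\tilde\gamma_i(t))/\phi(t)^d = 1$. The winding computation gives $\phi(1)/\phi(0) = e^{2\pi i m_i/d}$, whence $a(1) = e^{-2\pi i m_i/d}\, a(0) = h^{-m_i}(a(0))$. Therefore $s(\gamma_i) \equiv -m_i \pmod d$, and the monodromy of $\cV_k$ along $\gamma_i$ is $e^{-2\pi i k(-m_i)/d} = e^{2\pi i k m_i/d}$, as claimed.

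The only genuine geometric content is that $f$ winds $m_i$ times along any small loop transverse to $D_i$ at a general point, which is immediate from the factorization $f = u \cdot f_i^{m_i}$ locally. The main obstacle will really just be bookkeeping of signs --- between the $e^{-2\pi i k/d}$-eigenspace convention defining $\cV_k$, the identification $h \leftrightarrow 1 \in \bZ/d\bZ$, and the reciprocal $1/\phi$ appearing when passing from the lift in $V$ to the lift in $M$. Getting any one of these reversed produces $e^{-2\pi i k m_i/d}$ instead of the correct $e^{2\pi i k m_i/d}$.
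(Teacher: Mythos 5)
Your argument is correct and reaches the same endpoint ($s(\gamma_i)\equiv -m_i$, hence monodromy $e^{-2\pi i k(-m_i)/d}=e^{2\pi i k m_i/d}$), but by a genuinely different route from the paper's. The paper first uses the group structure of the $\cV_k$ to reduce to $k=1$, then intersects with a transversal $2$-plane $V\subset\bC^n$, explicitly parametrizes the fibers $p'^{-1}(Q_\theta)=\{\bx_{j,\theta}\}$ via a chosen branch $a_\theta$ of $g(e^{2\pi i\theta},1)^{-1/d}$, and reads off the deck transformation by tracking $\bx_{j,0}\rightsquigarrow\bx_{j,1}$. You instead treat all $k$ at once via the observation that, for an abelian deck group, the monodromy operator $T_{\gamma}$ on $(p_*\bC_M)_{u_0}$ coincides with $\rho(\gamma)=h^{s(\gamma)}$ and therefore acts on the $e^{-2\pi ik/d}$-eigenspace by $e^{-2\pi iks(\gamma)/d}$; and you compute $s(\gamma_i)$ by lifting the small loop trivially to the affine cone $\bC^n\setminus f^{-1}(0)$ and then rescaling by a continuous $d$-th root $\phi$ of $f$ to land in $M=f^{-1}(1)$, so the whole geometric content collapses to the winding number of $f$ along a transverse loop. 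This is cleaner: it avoids the transversal-plane reduction and the ad hoc fiber parametrization, replacing them with a single use of homogeneity ($f(\lambda a)=\lambda^d f(a)$). The one point worth making explicit in a final write-up is the equality $T_{\gamma}=h^{s(\gamma)}$ as operators on the stalk (equivalently, that a lift started at $h^j m_0$ ends at $h^{s(\gamma)}(h^j m_0)$), which uses $G$-equivariance of $p$ together with commutativity of $G$; you invoke it implicitly via ``the local system associated to the regular representation,'' but it is exactly the step where a sign error would hide, so it deserves a sentence. With that spelled out, the proof is complete and the sign bookkeeping is consistent with the paper's conventions ($\xi=e^{-2\pi i/d}$, $h(\bx)=e^{2\pi i/d}\bx$).
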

\begin{proof}
The $\cV_k$, with tensor product, form a group isomorphic to  $G$
(e.g. \cite{Bu-ULS}-\S 5). So it is enough to prove the lemma for
$k=1$.

Fix $i$ and denote $m_i$ by $m$. Let $P$ be a general point of
$D_i$. We consider a small loop $\tau=\{Q_\theta\ \in U|\ \theta
\in [0,1]\}$ around $P$, with $Q_0=Q_1=:Q$. We need to look at the
action $T_i$ of going along $\tau$ counterclockwise on the fiber
$(\cV_1)_Q$.

\medskip

First, $(p_*\bC _M)_Q=\oplus_{1\le j\le d} \bC v_{\xi ^j \bx}$,
where $\bx\in M\subset\bC^n$ is fixed and such that $p(\bx)=Q$,
$\xi=e^{-2\pi i/d}$, and $v_{\xi ^j \bx}$ are linearly
independent. Here $\{ \xi ^1 \bx ,\ldots ,\xi ^d \bx  \}$ is
$p^{-1}(Q)$. The action induced by $h$ on $(p_*\bC _M)_Q$ is given
by $v_{\xi ^j \bx}\mapsto v_{\xi ^{j-1} \bx}$. Hence, its
$\xi$-eigenspace is
$$
(\cV_1)_Q=\{ a\cdot\sum_{1\le j\le d}\xi^j v_{\xi ^j \bx}\ |\
a\in\bC\ \}.
$$

We will show that $T_iv_{\xi ^j \bx}=v_{\xi ^{j+m} \bx}$. This
implies that $T_i$ acts on $(\cV_1)_Q$ via multiplication by
$\xi^{-m}$, which is what we wanted to show.

\medskip

By considering the loop $\tau$ lying in a (real) plane, we can
simplify the computation. To this end, after linear change of
coordinates, we can assume the following. First, we can assume
$i=1$. Let $V=\{x_3=\ldots =x_n=0\}\subset \bC^n$ be transversal
to all the hyperplanes $\{f_i=0\}$. Define $M':=M\cap V$,
$U':=U\cap \bP V\subset \bP V=\bP^1$,
$f'(x_1,x_2):=f(x_1,x_2,0,\ldots ,0)$, and $p'=p_{|M'}$. Then
$M'=f'^{-1}(1)$. We can assume $P=[0:1]$ and $Q_\theta=[e^{2\pi i
\theta}:1]$ in $\bP V=\bP^1$, and that $f'=x_1^m g$ with $g$
having no zeros inside the disc centered at $P$ with boundary
$\tau=\{Q_\theta\}$. Then
$$
p'^{-1}(Q_\theta)=\{ \lambda\cdot(e^{2\pi i\theta},1)\ \in\ \bC^2
\ |\ \lambda ^de^{2\pi i m\theta}g(e^{2\pi i\theta},1)=1\  \}.
$$
For each $\theta$ fix $a_\theta$ such that $a_\theta^d=g(e^{2\pi
i\theta},1)^{-1}$. We can assume $a_0=a_1$. Then
$$
p'^{-1}(Q_\theta)=\{  \bx_{j,\theta}:= e^{2\pi
i(j-m\theta)/d}a_\theta \cdot(e^{2\pi i\theta},1)\ |\ 1\le j\le d\
\}.
$$
Fix $j$ and let $\bx =\bx_{j,0}$. Starting at $\bx$, going
counterclockwise along the inverse image by $p'$ of $\tau$, we end
up at $\bx_{j,1}=e^{-2\pi i m/d}\bx$. This shows that $T_iv_{\xi
^j \bx}=v_{\xi ^{j+m} \bx}$.
\end{proof}

\begin{lem}\label{explicitV_k} With notation as in Lemma \ref{monodromyGoesAround},
let $(L^{(k)},\alpha^{(k)})\in \Pic^\tau(X,D)$ correspond to
$\cV_k$ under the isomorphism of Theorem \ref{ls}. Then:
$$
\alpha^{(k)}_i  = \left\{\frac{km_i}{d} \right\},\ \ \ \ \ L^{(k)}
=\cO_X\left ( \mathop{\sum_{i\in S}}_{} \alpha_i^{(k)} d_i \right
)
 .
$$
\end{lem}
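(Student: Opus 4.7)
The plan is to derive both formulas by tracing $\cV_k$ through the two-step identification
$$\Hom(H_1(U,\bZ),S^1)\;\cong\;\Pic^\tau(Y,E)\;\cong\;\Pic^\tau(X,D)$$
given by Lemma \ref{relationboundaries} and Proposition \ref{comparison}, and then reading off $\alpha^{(k)}$ and $L^{(k)}$ component by component.

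First I would fix a log resolution $\mu:(Y,E)\to (X,D)$ which is an isomorphism above $U$, and for each $i\in S$ let $\widetilde D_i\subset E$ denote the strict transform of $D_i$. By Proposition \ref{comparison}, $(L^{(k)},\alpha^{(k)})$ corresponds to $(M^{(k)},\beta^{(k)})\in\Pic^\tau(Y,E)$ with $\beta^{(k)}=\{e^{(k)}\}$, where $e^{(k)}\cdot E=\mu^*(\alpha^{(k)}\cdot D)$; by Lemma \ref{relationboundaries}, $\beta^{(k)}_j\in[0,1)$ is exactly the monodromy exponent of $\cV_k$ around a general point of $E_j$.

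Step 1 (the $\alpha$-component). A general point of $D_i$ lies in the locus over which $\mu$ is an isomorphism and lifts to a general point of $\widetilde D_i$, so Lemma \ref{monodromyGoesAround} forces the $\beta^{(k)}$-entry on $\widetilde D_i$ to be $\{km_i/d\}$. On the other hand, since $\mu$ is an isomorphism near the generic point of $\widetilde D_i$, the coefficient of $\widetilde D_i$ in $\mu^*D_i$ is $1$ and in $\mu^*D_{i'}$ is $0$ for $i'\neq i$; hence the $e^{(k)}$-entry on $\widetilde D_i$ is exactly $\alpha^{(k)}_i$. Taking fractional parts and remembering $\alpha^{(k)}_i\in[0,1)$ yields $\alpha^{(k)}_i=\{km_i/d\}$.

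Step 2 (the $L$-component). The defining condition of $\Pic^\tau(X,D)$ gives $c_1(L^{(k)})=\sum_{i\in S}\alpha^{(k)}_i[D_i]$ in $H^2(X,\bR)$. On $X=\bP^{n-1}$, $\Pic(X)\hookrightarrow H^2(X,\bR)$ is generated by the hyperplane class $H$ with $[D_i]=d_iH$, so $L^{(k)}$ is determined by its degree $\sum_i\alpha^{(k)}_id_i$. I would then check this number is an integer, using $\sum_i m_id_i=\deg f=d$ and the decomposition $\alpha^{(k)}_i=km_i/d-\rndown{km_i/d}$, which gives
$$\sum_{i\in S}\alpha^{(k)}_id_i\;=\;k-\sum_{i\in S}\rndown{km_i/d}\,d_i\;\in\;\bZ,$$
and conclude $L^{(k)}=\cO_X\!\left(\sum_{i\in S}\alpha^{(k)}_id_i\right)$.

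The main obstacle is really only bookkeeping through the isomorphism of Theorem \ref{ls} and its compatibility with the log resolution; the one non-formal ingredient is the integrality in Step 2, and that in turn is forced by the global identity $\sum_i m_id_i=d$, so no genuine difficulty is expected.
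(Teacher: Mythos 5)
Your proposal is correct and follows essentially the same path as the paper: Proposition \ref{comparison} plus Lemma \ref{relationboundaries} identify $\alpha^{(k)}_i$ with the monodromy exponent around a general point of the strict transform of $D_i$, Lemma \ref{monodromyGoesAround} pins that exponent down to $\{km_i/d\}$, and the defining condition of $\Pic^\tau(X,D)$ on $\bP^{n-1}$ gives $L^{(k)}$ via its degree. Your explicit integrality check $\sum_i\alpha^{(k)}_id_i=k-\sum_i\rndown{km_i/d}d_i$ is a small helpful addition that the paper leaves implicit, but the argument is the same.
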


\begin{proof} By Proposition \ref{comparison} and Lemma
\ref{relationboundaries}, $\alpha^{(k)}_i\in [0,1)$ is given by
the monodromy of $\cV_k$ around a general point of (the proper
transform of) $D_i$. The conclusion for $\alpha^{(k)}_i$ then
follows from Lemma \ref{monodromyGoesAround}. The condition that
$(L^{(k)},\alpha^{(k)})\in \Pic^\tau(X,D)$ is that the degree of
$L^{(k)}$ equals $\sum_{i\in S}\alpha^{(k)}_i d_i$.
\end{proof}
Alternatively, one can prove Lemma \ref{explicitV_k} using
\cite{Bu-ULS} -Corollary 1.10 .

\medskip

Now we draw some conclusions about the spectrum $\text{Sp}(f)$ of
a homogeneous polynomial at the origin. By above discussion and
Corollary \ref{newdefSp}, the only rational numbers which can have
nonzero multiplicity in $\text{Sp}(f)$ are of the type
\begin{equation}\label{alpha}
\alpha =\frac{k}{d}+p\ \in\ (0,n),\ \ \ \ \text{ with } k,p
\in\bZ,\ 1\le k\le d,\ 0\le p <n.
\end{equation}

Let $\mu:(Y,E)\ra (X,D)$ be a log resolution which is an
isomorphism above $U$. With $\alpha, k,p,$ as in (\ref{alpha}),
define
\begin{align*}
& \beta_i^{(k)} :=\left\{ -\frac{km_i}{d} \right\}, \ \ \ \ \ \ \
\ \ M^{(k)} :=\cO_X\left ( \mathop{\sum_{i\in S}}_{}
\beta_i^{(k)} d_i  \right ),\\
&\cE_\alpha  := \Omega^{n-p-1}_{Y}(\log E)^\vee \otimes
\omega_{Y}\otimes \mu^*M^{(k)} \otimes \cO_{Y} \left(
-\rndown{\mu^*\left(  \beta^{(k)}\cdot D_{red} \right )} \right ),
\end{align*}
where in the last sheaf the tensor products are over $\cO_{Y}$.

\begin{prop}\label{spectrumHomogenous}
Let $\alpha$ be as in (\ref{alpha}). The multiplicity of $\alpha$
in $\text{Sp}(f)$ is
$$n_\alpha (f)=(-1)^{n-p-1}\chi(Y,\cE_\alpha).$$
\end{prop}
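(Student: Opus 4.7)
The plan is to read off $n_\alpha(f)$ from Corollary \ref{newdefSp}, translate the Milnor-fibre cohomology into local-system cohomology on $U$ via (\ref{MilnFibLocSYs}), and then invoke Theorem \ref{mainThmsectiononLocSys}(a) for the dual local system. Writing $\alpha=k/d+p$ as in (\ref{alpha}), a direct check shows $\rndown{n-\alpha}=n-p-1$: if $k<d$ the fractional part of $n-\alpha$ is $1-k/d\in(0,1)$, while if $k=d$ the number $n-\alpha=n-p-1$ is already an integer. Moreover $e^{-2\pi i\alpha}=e^{-2\pi ik/d}$ since $p\in\bZ$. Using that (\ref{MilnFibLocSYs}) is a Hodge-filtration-preserving isomorphism and that the Hodge filtration on $H^*(M_f,\bC)$ is induced by the one on $H^*(M,\bC)$, Corollary \ref{newdefSp} becomes
$$n_\alpha(f)=\sum_{j\in\bZ}(-1)^j\dim\Gr_F^{n-p-1}H^{n-1+j}(U,\cV_k).$$

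Next I would locate $\cV_k^\vee$ inside $\Pic^\tau(X,D)$ in order to feed Theorem \ref{mainThmsectiononLocSys}(a). Being rank one and unitary, with monodromy $e^{-2\pi ikm_i/d}$ around a general point of $D_i$, the same reasoning as in Lemma \ref{explicitV_k} (using Lemmas \ref{relationboundaries} and \ref{monodromyGoesAround}, combined with the fact that a line bundle on $X=\bP^{n-1}$ is determined by its degree, which the $\Pic^\tau$-condition forces to equal $\sum_i\beta_i^{(k)}d_i$) identifies $\cV_k^\vee$ with $(M^{(k)},\beta^{(k)})$. Taking $\cV=\cV_k^\vee$ in Theorem \ref{mainThmsectiononLocSys}(a), so that $\cV^\vee=\cV_k$ and $(L,\alpha)=(M^{(k)},\beta^{(k)})$, and recalling $\dim X=n-1$, yields
$$\dim\Gr_F^{n-p-1}H^{(n-p-1)+q'}(U,\cV_k)=h^{n-1-q'}(Y,\cE_\alpha)$$
for every $q'\in\bZ$, with $\cE_\alpha$ exactly as defined before the proposition.

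The final step is pure bookkeeping. The substitution $q'=p+j$ converts the sum over $j$ into a sum over $q'$ up to a global factor $(-1)^{-p}=(-1)^p$, and then the reindexing $q''=n-1-q'$ puts the cohomology back into standard form and contributes a further sign $(-1)^{n-1}$, giving
$$n_\alpha(f)=(-1)^{p}(-1)^{n-1}\sum_{q''\in\bZ}(-1)^{q''}h^{q''}(Y,\cE_\alpha)=(-1)^{n-p-1}\chi(Y,\cE_\alpha),$$
which is the claimed formula. The proof is essentially formal once Theorem \ref{mainThmsectiononLocSys}(a) and (\ref{MilnFibLocSYs}) are in hand, so there is no real obstacle to overcome; the only care needed is the correct identification of $\cV_k^\vee$ inside $\Pic^\tau(X,D)$ and the tracking of the two reindexing signs.
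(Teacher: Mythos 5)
Your proof is correct and follows essentially the same route as the paper's: Corollary \ref{newdefSp} together with (\ref{MilnFibLocSYs}) reduces $n_\alpha(f)$ to an alternating sum of $\dim\Gr_F^{n-p-1}H^*(U,\cV_k)$, the identification of $\cV_k^\vee$ with $(M^{(k)},\beta^{(k)})$ lets one apply Theorem \ref{mainThmsectiononLocSys}(a), and a reindexing of the alternating sum produces the Euler characteristic. You spell out the sign bookkeeping more explicitly than the paper, which simply asserts the resulting sum is ``equivalent to what we claimed.''
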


\begin{proof} By Corollary \ref{newdefSp} and
(\ref{MilnFibLocSYs}),
$$n_\alpha (f)= \sum_{j\in\bZ} (-1)^j \dim
\Gr_F^{n-p-1}{H}^{n-1+j}(U,\cV_k)=\star.$$ Now,
$(M^{(k)},\beta^{(k)})$ is an element of $\Pic^\tau (X,D)$ and it
can be checked by using Lemma \ref{explicitV_k} that it
corresponds to the dual $\cV_k^\vee$. In fact, $\cV_k^\vee$ equals
$\cV_{d-k}$ if $k\ne d$, and equals $\cV_d$ if $k=d$. Since
$(\cV_k^\vee)^\vee=\cV_k$, by applying Theorem
\ref{mainThmsectiononLocSys}-(a), we have
$$\star =\sum_{j\in\bZ} (-1)^{j} h^{n-j-p-1}(Y,\cE_\alpha),
$$
which is equivalent to what we claimed.
\end{proof}

By Hirzebruch-Riemann-Roch (Theorem \ref{thm HRR}), Proposition
\ref{spectrumHomogenous} is useful when the topology of a log
resolution is known:

\begin{cor}\label{HomogRR} Let $\alpha$ be as in (\ref{alpha}). The multiplicity of $\alpha$
in $\text{Sp}(f)$ is the intersection number
$$n_\alpha (f)=(-1)^{n-p-1}\left ( ch(\cE_\alpha)\cdot Td(Y)\right )_{n-1}.$$
\end{cor}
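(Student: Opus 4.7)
The plan is to invoke Proposition \ref{spectrumHomogenous} and then convert the Euler characteristic into an intersection number via Hirzebruch-Riemann-Roch. By Proposition \ref{spectrumHomogenous}, we already have
$$n_\alpha(f) = (-1)^{n-p-1}\chi(Y,\cE_\alpha),$$
so the only remaining task is to rewrite $\chi(Y,\cE_\alpha)$ in cohomological/intersection-theoretic terms.

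First I would record that $Y$ is a smooth projective variety of dimension $n-1$, since $Y$ is a log resolution of $(X,D)$ with $X = \bP^{n-1}$. Next I would check that $\cE_\alpha$ is a locally free $\cO_Y$-module of finite rank: it is built from $\Omega_Y^{n-p-1}(\log E)^\vee$ (locally free because $E$ is a simple normal crossings divisor on the smooth variety $Y$), tensored with the line bundles $\omega_Y$, $\mu^*M^{(k)}$, and $\cO_Y(-\rndown{\mu^*(\beta^{(k)}\cdot D_{\rm red})})$.

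Then I would apply Theorem \ref{thm HRR} directly: since $\cE_\alpha$ is a vector bundle on the smooth projective variety $Y$ of dimension $n-1$,
$$\chi(Y,\cE_\alpha) = \bigl(ch(\cE_\alpha)\cdot Td(Y)\bigr)_{n-1}.$$
Substituting this into the formula from Proposition \ref{spectrumHomogenous} yields the claimed expression for $n_\alpha(f)$.

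There is no real obstacle here; the corollary is a one-line consequence of HRR applied to the proposition. The only thing worth being careful about is the degree in which the intersection product is taken, namely $2(n-1)$ (i.e.\ the top-dimensional piece on $Y$), corresponding to the index $n-1$ in the notation $(\,\cdot\,)_{n-1}$ introduced before Theorem \ref{thm HRR}.
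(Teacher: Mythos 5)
Your proof is correct and takes exactly the route the paper intends: the paper introduces Corollary \ref{HomogRR} as an immediate consequence of applying Hirzebruch--Riemann--Roch (Theorem \ref{thm HRR}) to Proposition \ref{spectrumHomogenous}, and gives no further argument. The extra care you take in checking that $Y$ is smooth projective of dimension $n-1$ and that $\cE_\alpha$ is locally free is sound but unremarkable; it matches the implicit hypotheses of the paper.
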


\section{Spectrum of hyperplane arrangements}

 A {\it central
hyperplane arrangement} in $\bC^n$ is a finite set $\cA$  of
vector subspaces of dimension $n-1$. The {\it intersection
lattice} of $\cA$, denoted $L(\cA)$, is the set of subspaces of
$\bC^n$ which are intersections of subspaces $V\in\cA$ (see
\cite{OT}). For $V\in\cA$, let $f_V$ be the linear homogeneous
equation defining $V$, and let $m_V\in \bN-\{0\}$. Let
$f=\prod_{V\in\cA}f_V^{m_V}\in\bC[x_1,\ldots ,x_n]$ be a
homogeneous polynomial of degree $d=\sum_{V\in\cA}m_V$. Denote by
$D$  the hypersurface defined by $f$  in $X:=\bP^{n-1}$. Let
$U=X-D$. Let $\cG'\subset L(\cA)-\{\bC^n\}$ be a building set (see
\cite{DP}-2.4 or \cite{Te}-Definition 1.2). Let
$\cG=\cG'\cup\{0\}$. For simplicity, one can stick with the
following example for the rest of the article:
$\cG=L(\cA)\cup\{0\}-\{\bC^n\}$, when $\cG'$ is chosen to be
$L(\cA)-\{\bC^n\}$. The advantage of considering smaller building
sets is that computations might be faster (see \cite{Te}-Example
1.3-(c)). For any vector space $V$ of $\bC^n$, we denote by
$\delta(V)$ (resp. $r(V)$) the dimension (resp. codimension) of
$V$.

\medskip
\noindent {\bf The canonical log resolution.} We consider the
canonical log resolution $\mu:(Y,E)\ra (X,D)$ of $(X,D)$ obtained
from successive blowing-ups of the (disjoint) unions of (the
proper transforms) of $\bP(V)$ for $V\in \cG-\{0\}$ of same
dimension. This is the so-called wonderful model of \cite{DP}-
section 4. More precisely, $\mu$ and $Y$ are constructed as
follows (see also \cite{Bu-JNHA} -Section 4, \cite{MS}). Let
$X_0=X$. Let $C_0$ be the disjoint union of $\bP(V)$ for
$V\in\cG-\{0\}$ with $\delta (V)=1$. Let $\mu_0:X_1\ra X_0$ be the
blow up of $C_0$. Then $\mu_i$ and $X_{i+1}$ are constructed
inductively as follows. Let $C_i\subset X_{i}$ be the disjoint
union of the proper transforms, under the map $\mu_{i-1}$, of
$\bP(V)$ for $V\in \cG-\{0\}$ with $\delta(V)=i+1$.  Let
$\mu_i:X_{i+1}\ra X_i$ for $0\le i <n-2$ be the blow up of $C_i$.
Define $Y=X_{n-2}$ and $\mu$ as the composition of the $\mu_i$.

\medskip

For $V\in \cG-\{0\}$ with $\delta(V)=i+1$, let $E_V$ be the proper
transform of the exceptional divisor in $X_{i+1}$ corresponding to
(the proper transform of) $\bP(V)$ (in $X_i$). Also  let $E_0$
denote the proper transform in $Y$ of a general hyperplane of $X$.
Denote by $[E_V]$ the cohomology class of $E_V$ on $Y$
($V\in\cG$), where it will be clear from context what coefficients
(integral, rational) we are considering. $E$ denotes the union of
$E_V$ for $V\in\cG-\{0\}$.

\medskip
\noindent{\bf Intersection theory on the canonical log
resolution.} Let $I\subset\bZ[c_V]_{V\in\cG}$ be the ideal
generated by two types of polynomials:
\begin{equation}\label{eq, type 1}
\prod_{V\in \cH}c_V
\end{equation}
if $\cH\subset\cG$ is not a nested subset, and by
\begin{equation}\label{eq, type 2}
\prod_{V\in \cH}c_V\left (  \sum_{W'\subset W} c_{W'} \right
)^{d_{\cH,W}},
\end{equation}
where $\cH\subset \cG$ is a nested subset, $W\in\cG$ is such that
$W\varsubsetneq V$ for all $V\in\cH$, and
$d_{\cH,W}=\delta(\cap_{V\in\cH} V) -\delta (W)$. In (\ref{eq,
type 2}), one considers $\cH=\emptyset$ to be nested, in which
case (\ref{eq, type 2}) is defined for every $W\in\cG$ by setting
$\delta (\emptyset)=n$. Here $I$ is the ideal of \cite{DP}-5.2, for the projective case.
$I$ depends only on $\cG$ and  $\bZ[c_V]_{V\in\cG}/ I$ is
isomorphic to the cohomology ring of the canonical log resolution:

\begin{thm}\label{rem. isom} (\cite{DP}) With notation as above, there is an isomorphism
\begin{align}\label{eq. isom cohomology}
\bZ[c_V]_{V\in\cG} / I\   & \mathop{\longrightarrow}^{\sim}\
H^*(Y,\bZ)\  \mathop{\longleftarrow}^{\sim}\
\bZ[[c_V]]_{V\in\cG} / I \\
\notag & 1\mapsto [Y],\\
\notag & c_V  \mapsto [E_V]\ \ \ \ \text{ if }V\ne 0,\\
\notag & c_0 \mapsto -[E_0].
\end{align}
\end{thm}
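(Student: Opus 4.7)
The plan is to treat this as the (projective) wonderful compactification theorem of De Concini--Procesi and recover it by tracking the cohomology through the sequence of blow-ups. The map in \eqref{eq. isom cohomology} has the right target (it does send the $c_V$ to divisor classes and lands in a commutative ring), so the three things to verify are: (i) the generators $[E_V]$ together with $[E_0]$ really generate $H^*(Y,\bZ)$, (ii) the relations of types \eqref{eq, type 1} and \eqref{eq, type 2} hold in cohomology, and (iii) there are no further relations, so that the map is an isomorphism of free $\bZ$-modules.

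For (i), I would argue inductively along the tower $X=X_0\leftarrow X_1\leftarrow\cdots\leftarrow X_{n-2}=Y$. Each $\mu_i\colon X_{i+1}\to X_i$ blows up a disjoint union of smooth centers, so by the standard blow-up formula
\[
H^*(X_{i+1},\bZ)=\mu_i^*H^*(X_i,\bZ)\ \oplus\ \bigoplus_{k=1}^{c-1} H^*(\text{center})\cdot\xi^k,
\]
where $\xi$ is the class of the new exceptional divisor and $c$ its codimension. Starting from $H^*(\bP^{n-1},\bZ)=\bZ[H]/(H^n)$ and noting that $\mu^*H=[E_0]$ (a generic hyperplane avoids every center $\bP(V)$ and all their proper transforms), one sees that $H^*(Y,\bZ)$ is generated as a ring by $[E_0]$ and the classes $[E_V]$ for $V\in\cG-\{0\}$.

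For (ii), the type \eqref{eq, type 1} relations are the purely combinatorial input from the wonderful construction: the iterated blow-up is arranged precisely so that $\bigcap_{V\in\cH} E_V\ne\emptyset$ if and only if $\cH$ is a nested subset of $\cG$ (this is the content of De Concini--Procesi's transversality analysis in the building set setup). Hence non-nested products of the $[E_V]$ must vanish already on the level of supports. The type \eqref{eq, type 2} relations come from the projective bundle relation at each blow-up: when the proper transform of $\bP(W)$ is blown up, its normal bundle can be identified with (a twist of) the restriction of $\bigoplus_{W'\subsetneq W}\cO(-E_{W'})$ to the appropriate stratum, and the tautological relation for the exceptional $\bP$-bundle translates, after tracing through the identifications above, into the stated vanishing with the codimension exponent $d_{\cH,W}=\delta(\cap_{V\in\cH}V)-\delta(W)$.

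The hardest step is (iii) — showing $I$ really exhausts the kernel. I would handle it by a rank comparison. Both sides are free graded $\bZ$-modules, so it suffices to check they have equal graded rank. On the geometric side, iterated application of the blow-up formula gives
\[
P(Y,t)=P(X,t)+\sum_{V\in\cG-\{0\}}\bigl(t^2+t^4+\cdots+t^{2(r(V)-1)}\bigr)\,P(\bP(V'),t),
\]
where $\bP(V')$ is the appropriate proper transform (this can be unwound inductively). On the combinatorial side, one exhibits a monomial basis of $\bZ[c_V]/I$ indexed by admissible functions on nested subsets (as in \cite{DP}) and checks its Poincaré polynomial matches. Producing this basis and verifying agreement is the main technical obstacle; once done, surjectivity from (i) and the relations from (ii) force the map to be an isomorphism.
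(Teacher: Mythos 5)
This theorem is not actually proved in the paper: it is quoted as a consequence of De Concini--Procesi, specifically the Theorem of \cite{DP}~Section~5.2 together with Theorem~4.1~part~(2) and Theorem~4.2~part~(4), plus a pointer to Remark~4.3 of \cite{Bu-JNHA} for the translation to the projective setting. So the comparison is between a citation and your attempt to reprove the cited result from scratch.

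Your outline is the correct strategy behind the De Concini--Procesi theorem, and steps~(i) and the type-\eqref{eq, type 1} part of~(ii) are described accurately. But the proposal has two genuine gaps. First, your derivation of the type-\eqref{eq, type 2} relations is not right as stated: identifying the normal bundle of the proper transform of $\bP(W)$ with a twist of $\bigoplus_{W'\subsetneq W}\cO(-E_{W'})$ and invoking the projective-bundle relation does not produce these relations, and $d_{\cH,W}=\delta(\cap_{V\in\cH}V)-\delta(W)$ is not the codimension of a single blow-up center. In \cite{DP} the relation is obtained by restricting the class $\sum_{W'\subset W}c_{W'}$ to the closed nested stratum $\bigcap_{V\in\cH}E_V$, where it becomes (a pullback of) the hyperplane class of a projective space of dimension $d_{\cH,W}-1$, and then concluding the vanishing of its $d_{\cH,W}$-th power by dimension; this uses the description of the strata as iterated projective bundles, not a one-step blow-up formula. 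Second, and more seriously, you explicitly leave step~(iii) --- producing the $\bZ$-basis of monomials indexed by nested sets and admissible exponent vectors, and checking that its Poincar\'e polynomial matches the one coming from the blow-up formula --- as an acknowledged ``main technical obstacle.'' Without that, you have shown the displayed map is a well-defined surjection but not an isomorphism. For a self-contained argument you would need to carry out the basis construction of \cite{DP}~Sections~5.1--5.2; otherwise the honest move is exactly what the paper does, namely cite it.
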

Theorem \ref{rem. isom} follows from \cite{DP}-5.2 Theorem,
\cite{DP}-4.1 Theorem, part (2), and \cite{DP}-4.2 Theorem, part
(4); see also \cite{Bu-JNHA} -Remark 4.3. Remark that the degree
$n-1$ homogeneous part of $\bZ[[c_V]]_{V\in\cG}/I$ can be
identified with $\bZ\cdot (-c_0)^{n-1}$.

\medskip
For every $V\in\cG-\{0\}$ define a formal power series $F_V\in\bZ
[[c_V]]_{V\in\cG}$ by
$$
F_V:=(1-\mathop{\sum_{ W\varsubsetneq
V}}_{W\in\cG}c_W)^{-r(V)}(1+c_V)(1-\mathop{\sum_{ W\subset V
}}_{W\in\cG} c_W)^{r(V)}.$$  Also, set $F_0=(1-c_0)^n$ and define
$F:=\prod_{V\in\cG}F_V$.

\begin{prop}\label{propchernclasscanonicalresolution}(\cite{Bu-JNHA} -Proposition 4.7.)
 The total Chern class $c(Y)$ is the image
in $H^*(Y,\bZ)$ of $F$ under the map (\ref{eq. isom cohomology}).
\end{prop}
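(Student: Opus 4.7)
The plan is to prove the formula by induction along the tower $Y = X_{n-2} \xrightarrow{\mu_{n-3}} X_{n-3} \to \cdots \to X_1 \xrightarrow{\mu_0} X_0 = \bP^{n-1}$, applying the Chern class formula for the blowup of a smooth subvariety at each step and isolating the factor contributed by each $V \in \cG-\{0\}$. The base case is immediate: $c(T_{\bP^{n-1}}) = (1+h)^n$, and since $[E_0] = \mu^*h$ is the proper transform of a general hyperplane, the substitution $c_0 \mapsto -[E_0]$ sends $F_0 = (1-c_0)^n$ to $\mu^*c(T_X)$.

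For the inductive step, at stage $i$ the morphism $\mu_i \colon X_{i+1} \to X_i$ blows up the disjoint union $C_i = \bigsqcup_{V \in \cG,\ \delta(V)=i+1} \tilde{\bP(V)}$, so by disjointness of the centres the Chern class transforms as
\[
c(T_{X_{i+1}}) \;=\; \mu_i^* c(T_{X_i}) \cdot \!\!\!\prod_{\substack{V\in \cG\\ \delta(V)=i+1}}\!\! \Phi_V,
\]
where $\Phi_V$ is the standard correction factor for the blowup of the smooth centre $\tilde{\bP(V)}$, of codimension $r(V)$. Writing the correction in terms of the exceptional divisor $E_V$ and Chern roots of the normal bundle $N_V := N_{\tilde{\bP(V)}/X_i}$, one obtains
\[
\Phi_V \;=\; (1+[E_V]) \cdot \frac{\mu_i^*\, c\bigl(N_V \otimes \cO(-E_V)\bigr)}{\mu_i^*\, c(N_V)},
\]
after pullback to $X_{i+1}$, with the convention that the roots $\xi_j$ of $N_V$ are replaced by $\xi_j - [E_V]$ in the numerator.

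The crucial combinatorial input is to identify the normal bundle $N_V$ in terms of the classes $c_W$. By the building-set property of $\cG$, at the instant $\tilde{\bP(V)}$ is blown up, the previously introduced exceptional divisors $E_W$ (for $W\in\cG$ with $W\subsetneq V$) meet $\tilde{\bP(V)}$ transversally, and together with $\mu^* h$ they account for every direction normal to $\tilde{\bP(V)}$ in $X_i$. Pushing the analysis of \cite{DP} through gives
\[
c(N_V) \;=\; \Bigl(1 - \!\!\!\sum_{\substack{W\in\cG\\ W\subsetneq V}}\!\! c_W \Bigr)^{r(V)} \mod I.
\]
Substituting this into $\Phi_V$, setting $[E_V]=c_V$, and simplifying using $(1 - \sum_{W\subset V}c_W) = (1 - \sum_{W\subsetneq V}c_W) - c_V$, one recovers exactly
\[
\Phi_V \;=\; (1+c_V)\Bigl(1 - \!\!\!\sum_{\substack{W\in\cG\\ W\subsetneq V}}\!\! c_W\Bigr)^{-r(V)}\!\Bigl(1 - \!\!\!\sum_{\substack{W\in\cG\\ W\subset V}}\!\! c_W\Bigr)^{r(V)} = F_V.
\]
Multiplying the resulting $\Phi_V$'s over all $V\ne 0$ and combining with $F_0$ from the base yields $c(Y) = F = \prod_{V \in \cG} F_V$ modulo $I$.

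The main obstacle is the normal bundle identification, which requires knowing precisely how the cohomology class of $\tilde{\bP(V)}$ and of its normal directions evolve under the successive blowups indexed by the strictly smaller members of $\cG$. This is where the building-set hypothesis does real work: it guarantees that after blowing up all $W\subsetneq V$ in $\cG$, the proper transform of $\bP(V)$ is smooth and meets the exceptional divisors transversally, so the normal bundle at the moment of blowup is a direct-sum/twisted expression whose total Chern class admits the stated closed form. Once this geometric input is in hand, the rest of the argument is the bookkeeping outlined above, and the formal-power-series ring $\bZ[[c_V]]/I$ provides the natural receptacle since only the finitely many monomials surviving the nestedness relations (\ref{eq, type 1}) and (\ref{eq, type 2}) contribute.
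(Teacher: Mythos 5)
The paper itself does not prove this proposition; it is imported verbatim from \cite{Bu-JNHA} (Proposition 4.7), so there is no in-paper argument to compare against. Your inductive strategy along the blowup tower $Y=X_{n-2}\to\cdots\to X_0=\bP^{n-1}$ is exactly the natural route and is, to the best of my knowledge, the route taken in that reference. The algebraic bookkeeping is right: once one grants $c(N_V)=(1-\sum_{W\subsetneq V}c_W)^{r(V)}$ and the multiplicative blowup formula, your simplification does yield $F_V$, and the base case $F_0\mapsto(1+[E_0])^n=\mu^*c(T_X)$ is correct because a general hyperplane meets each center transversally and so its proper transform equals its total transform.

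That said, the proposal as written leaves the two load-bearing facts unproved. First, the multiplicative correction factor $\Phi_V=(1+[E_V])\prod_j\frac{1+\pi^*\nu_j-[E_V]}{1+\pi^*\nu_j}$ is \emph{not} an unconditional consequence of the blowup; the Porteous--type statement expresses $c(T_{\tilde X})-\pi^*c(T_X)$ as a class pushed forward from $E$, and the clean multiplicative form you invoke requires the Chern roots $\nu_j$ of $N_{Z/X_i}$ to be restrictions of classes on $X_i$ so that $\pi^*\nu_j$ even makes sense. In the De Concini--Procesi setting this holds because everything is generated by (restrictions of) $\mu^*h$ and the exceptional classes $[E_W]$, but that is precisely the content that needs checking, not a free lunch. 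Second, and more substantially, the normal bundle identification
\[
c\bigl(N_{\widetilde{\bP(V)}/X_i}\bigr)\;=\;\Bigl(1-\!\!\sum_{W\in\cG,\;W\subsetneq V}\!\!c_W\Bigr)^{r(V)} \pmod{I}
\]
is the crux of the whole computation and you defer it entirely to ``pushing the analysis of \cite{DP} through.'' Proving it requires the structural results on the wonderful model: that at the stage $\bP(V)$ is blown up its proper transform is smooth, that the previously created $E_W$ ($W\subsetneq V$, $W\in\cG$) meet it transversally, and that the normal bundle is precisely $\cO(1)^{\oplus r(V)}$ twisted down by these exceptional divisors; this in turn hinges on the building-set/nested-set combinatorics (e.g.\ \cite{DP}-4.1 and 4.2). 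Without that lemma stated and established --- or at least an explicit citation to the relevant part of \cite{DP} together with the verification that the twist is exactly by $\sum_{W\subsetneq V}[E_W]$ --- the proof has a genuine gap at its most important step. You have correctly identified this as the ``main obstacle''; I would want to see it actually closed rather than flagged.
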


Let $Q(x)$ be as in (\ref{chernDefs}). For every $V\in\cG-\{0\}$
define a formal power series $G_V\in\bQ [[c_V]]_{V\in\cG}$ by
$$G_V:=Q(-\mathop{\sum_{ W\varsubsetneq
V}}_{W\in\cG}c_W)^{-r(V)}Q(c_V)Q(-\mathop{\sum_{ W\subset
V}}_{W\in\cG} c_W)^{r(V)}.$$ Also, set $G_0=Q(-c_0)^n$ and define
$G:=\prod_{V\in\cG}G_V$.

\begin{cor}\label{cor. todd class canonical resolutions} (\cite{Bu-JNHA} -Corollary 4.8.)
The Todd class $Td(Y)$ is the image in $H^*(Y,\bQ)$ of $G$ under
the map induced by (\ref{eq. isom cohomology}) after
$\otimes_\bZ\bQ$.
\end{cor}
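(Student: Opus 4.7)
The strategy is to upgrade the factorization $c(Y) = F$ from Proposition~\ref{propchernclasscanonicalresolution} to the analogous factorization for $Td(Y)$, exploiting the fact that $F$ and $G$ are obtained by applying two different multiplicative characteristic classes --- the total Chern class and the Todd class --- to the same virtual decomposition of $[T_Y]$ in $K^0(Y) \otimes \bQ$. Indeed, each $F_V$ is a product of elementary factors of the form $(1+y)^{\pm k}$ with $y$ a $\bZ$-linear combination of the $c_W$, and $G_V$ is obtained from $F_V$ by replacing every such $(1+y)^{\pm k}$ by $Q(y)^{\pm k}$. Since a line bundle $L$ with $c_1(L)=y$ satisfies $c_t(L) = 1+yt$ and $td(L) = Q(y)$, and since both the total Chern class and the Todd class extend to multiplicative maps from $K^0(Y) \otimes \bQ$ to the rational cohomology ring, the result will follow once a suitable $K$-theoretic decomposition of $[T_Y]$ is exhibited whose Chern polynomial reads factor by factor as $F$.

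The construction of such a decomposition is the content of the proof of Proposition~\ref{propchernclasscanonicalresolution} in \cite{Bu-JNHA}; one iterates the standard blow-up formulas for tangent bundles along the tower $X = X_0 \leftarrow X_1 \leftarrow \cdots \leftarrow X_{n-2} = Y$. Under the identifications $c_V \leftrightarrow [E_V]$ and $-c_0 \leftrightarrow [E_0]$ from Theorem~\ref{rem. isom}, the Euler sequence on $\bP^{n-1}$ accounts for the factor $F_0 = (1-c_0)^n$ (reflecting that, in $K$-theory and modulo the trivial summand, $T_{\bP^{n-1}}$ has $n$ Chern roots equal to $-c_0$), while each successive blow-up along the proper transform of $\bP(V)$ produces the factor $F_V$: the middle piece $(1+c_V)$ records the line-bundle direction of $\cO_Y(E_V)$ in the normal data, and the outer pieces $(1 - s_V)^{-r(V)}$ and $(1 - s_V - c_V)^{r(V)}$, with $s_V = \sum_{W \subsetneq V,\, W \in \cG} c_W$, record the Chern roots of the normal bundle of $\bP(V)$ in $\bP^{n-1}$ after being twisted by the classes of the previously produced exceptional divisors. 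Each outer factor is interpreted as the Chern polynomial of a virtual rank-$\pm r(V)$ bundle whose Chern roots all coincide with a single linear form in the $c_W$.

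Applying the Todd class to this same virtual decomposition transforms each elementary factor $(1+y)^{\pm k}$ into $Q(y)^{\pm k}$ by multiplicativity, so $F_V$ becomes $G_V$ and $F_0$ becomes $G_0 = Q(-c_0)^n$; multiplying over $V \in \cG$ yields $Td(Y) = G$ after passing through the map (\ref{eq. isom cohomology}). The main obstacle is the bookkeeping implicit in the previous paragraph: one must verify that the sequence of blow-up identities assembles, via the combinatorics of the building set $\cG$ and the nested-set ideal $I$, into the clean product form $\prod_V F_V$ with each factor coming from an honest virtual bundle rather than only a numerical identity at the level of total Chern classes. Once this has been carried out in the proof of Proposition~\ref{propchernclasscanonicalresolution}, the passage from $F$ to $G$ reduces to the formal substitution $(1+y) \rightsquigarrow Q(y)$ and requires no further geometric input.
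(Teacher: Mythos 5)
Your underlying idea — that $G$ should follow from the total Chern class identity $c(Y)=F$ by the formal substitution $(1+y)\rightsquigarrow Q(y)$ in each degree-one factor — is correct. But your proposal inserts a detour through $K$-theory that is not needed, and you misidentify as a ``main obstacle'' a requirement (an honest virtual splitting of $[T_Y]$ into line bundles in $K^0(Y)\otimes\bQ$) that the argument does not actually depend on. The Todd class is a \emph{multiplicative sequence} in Hirzebruch's sense: there is a universal sequence of weighted-homogeneous polynomials $T_j(c_1,\dots,c_j)$ such that for \emph{any} elements $a=1+a_1+a_2+\cdots$, $b=1+b_1+b_2+\cdots$ of a graded $\bQ$-algebra (not necessarily Chern classes of bundles) one has $T(ab)=T(a)T(b)$, hence also $T(a^{-1})=T(a)^{-1}$, and the characteristic series is $T(1+y)=Q(y)$ whenever $y$ has degree one. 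Since $Td(Y)$ is by definition $T(c_1(T_Y),c_2(T_Y),\dots)$, the equality $c(Y)=F$ (a purely cohomological identity) already yields
$$Td(Y)=T(F)=\prod_{V\in\cG}T(F_V)=\prod_{V\in\cG}G_V=G,$$
because each $F_V$ is a finite product $\prod_j(1+y_j)^{\pm k_j}$ with every $y_j$ a $\bZ$-linear combination of the degree-one generators $c_W$, and $T$ sends each such factor to $Q(y_j)^{\pm k_j}$. No $K$-theoretic lift of $[T_Y]$ to a virtual sum of line bundles is required; the conclusion follows formally from Proposition~\ref{propchernclasscanonicalresolution} and the multiplicativity of $T$. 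So your proof would succeed \emph{provided} the $K^0$-decomposition you posit indeed holds, but this extra geometric input is superfluous, and framing its verification as the crux of the argument obscures the fact that the passage from $F$ to $G$ is entirely formal.
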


For a power series $\xi \in\bZ[[c_V]]_{V\in\cG}$, let $\xi_i$
denote the degree $i$ part, such that $\xi=\sum_i\xi_i$. Define a
formal power series $H\in\bZ[[c_V]]_{V\in\cG}$ by
$$H:=\left ( \sum_i (-1)^iF_i\right
) \cdot \prod_{V\in\cG-\{0\}}\frac{1}{1-c_V}.
$$

\begin{lem}\label{computationOmegaLog}
The total Chern class $c(\Omega^1_{Y}(\log E))$ is the image in
$H^*(Y,\bZ)$ of $H$ under the map (\ref{eq. isom cohomology}).
\end{lem}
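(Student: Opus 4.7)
The plan is to decompose the computation using the standard residue (Poincaré) exact sequence for a simple normal crossings divisor and then combine it with Proposition \ref{propchernclasscanonicalresolution}.

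Since $E=\bigcup_{V\in\cG-\{0\}}E_V$ is simple normal crossings on $Y$, we have the short exact sequence
$$0\ra \Omega^1_Y\ra \Omega^1_Y(\log E)\ra \bigoplus_{V\in\cG-\{0\}}\cO_{E_V}\ra 0.$$
Because total Chern classes are multiplicative on short exact sequences (equivalently, additive on the Grothendieck group), this gives
$$c(\Omega^1_Y(\log E))=c(\Omega^1_Y)\cdot \prod_{V\in\cG-\{0\}} c(\cO_{E_V}).$$
So the task is to identify each of the two factors on the right with the corresponding piece of $H$ via the isomorphism of Theorem \ref{rem. isom}.

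For the first factor, since $\Omega^1_Y=T_Y^\vee$, the duality rule $c_i(\cE^\vee)=(-1)^ic_i(\cE)$ from (\ref{chernFromulas}) yields $c(\Omega^1_Y)=\sum_i(-1)^ic_i(T_Y)$. By Proposition \ref{propchernclasscanonicalresolution}, $c_i(T_Y)$ is the image of $F_i$, so $c(\Omega^1_Y)$ is the image of $\sum_i(-1)^iF_i$. For each $V\in\cG-\{0\}$ the structure-sheaf sequence
$$0\ra \cO_Y(-E_V)\ra \cO_Y\ra \cO_{E_V}\ra 0$$
gives, again by multiplicativity of $c$,
$$c(\cO_{E_V})=\frac{c(\cO_Y)}{c(\cO_Y(-E_V))}=\frac{1}{1-[E_V]}.$$
Under Theorem \ref{rem. isom}, $[E_V]\mapsto c_V$, so $c(\cO_{E_V})$ is the image of $(1-c_V)^{-1}$.

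Multiplying the two contributions and invoking Theorem \ref{rem. isom} identifies $c(\Omega^1_Y(\log E))$ with the image of
$$\left(\sum_i(-1)^iF_i\right)\cdot\prod_{V\in\cG-\{0\}}\frac{1}{1-c_V}=H,$$
as claimed. There is no real obstacle here beyond checking convergence/well-definedness: a priori $H$ lives in the power-series ring $\bZ[[c_V]]_{V\in\cG}$, but by Theorem \ref{rem. isom} the map to $H^*(Y,\bZ)$ factors through a nilpotent quotient, so only finitely many terms contribute and the image is the finite total Chern class $c(\Omega^1_Y(\log E))$.
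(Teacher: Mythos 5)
Your proof is correct and follows essentially the same route as the paper: the residue exact sequence for $\Omega^1_Y(\log E)$, multiplicativity of the total Chern class, Proposition \ref{propchernclasscanonicalresolution} for $c(\Omega^1_Y)$ via $(-1)^iF_i$, and the structure-sheaf sequence for $c(\cO_{E_V})=1/(1-[E_V])$. The only differences are cosmetic: you spell out the structure-sheaf sequence and the well-definedness remark explicitly, and you correctly write $c_i(T_Y)$ where the paper has a small typo $c_i(T_X)$.
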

\begin{proof} $\Omega^1_{Y}(\log E)$ fits into a
short exact sequence (see \cite{EV} -2.3 Properties (a)):
$$0\ra \Omega^1_{Y}\ra \Omega^1_{Y}(\log E)\ra
\bigoplus _{V\in\cG-\{0\}} \cO_{E_V}\ra 0.
$$
By (\ref{chernFromulas}), $c(\Omega^1_{Y}(\log
E))=c(\Omega^1_{Y})\cdot\prod_{V\in\cG-\{0\}}c(\cO_{E_V})$. Now,
by Proposition \ref{propchernclasscanonicalresolution},
$c_i(\Omega^1_{Y})=(-1)^ic_i(T_X)=(-1)^i(F)_i$. Also,
$c(\cO_{E_V})=1/(1-[E_V])$ since $E_V$ is a hypersurface in $Y$.
\end{proof}

Fix $p\in\{0,\ldots ,n-1\}$. Denote by $e_i(x_1,\ldots ,x_{n-1})$
the coefficient of $t^i$ in $\prod_{1\le i\le n-1}(1+x_it)$. The
coefficient of $t^i$ in $\prod_{1\le i_1<\ldots <i_p\le
n-1}(1+(x_{i_1}+\ldots +x_{i_p})t)$ is $K_{p,i}'(e_1,\ldots
,e_{n-1})$ for some polynomial $K_{p,i}'$ in $n-1$ variables over
$\bZ$. Here $K_{0,i}'=1$ if $i=0$ and equals $0$ if $i\ne 0$.
Define
$$K_{p,i}:=K_{p,i}'(H_1,\ldots
,H_{n-1})\in \bZ[c_V]_{V\in\cG},
$$
where $H_j$ is the degree $j$ part of $H$.

\begin{lem}\label{computationWedgeOmegaLog} The Chern class $c_i(\Omega^p_{Y}(\log E))$ is the image
in $H^*(Y,\bZ)$ of the polynomial $K_{p,i}$ under the map
(\ref{eq. isom cohomology}).
\end{lem}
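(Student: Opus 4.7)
The plan is to reduce the computation to the previous lemma plus the standard formula for Chern classes of exterior powers. Since $\Omega^p_Y(\log E) = \bigwedge^p \Omega^1_Y(\log E)$ and $\Omega^1_Y(\log E)$ is locally free of rank $n-1$ (as $\dim Y = n-1$), I would first introduce formal Chern roots $x_1,\ldots,x_{n-1}$ for $\Omega^1_Y(\log E)$ via the splitting principle. By definition of the elementary symmetric functions $e_j(x_1,\ldots,x_{n-1})$ one has $e_j = c_j(\Omega^1_Y(\log E))$, and by Lemma \ref{computationOmegaLog} this is exactly $H_j$ (the degree-$j$ part of $H$) under the isomorphism \eqref{eq. isom cohomology}.

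Next, I would invoke the formula from \eqref{chernFromulas},
$$c_t\bigl(\bigwedge^p \Omega^1_Y(\log E)\bigr) = \prod_{1\le i_1<\ldots<i_p\le n-1}\bigl(1 + (x_{i_1}+\ldots+x_{i_p})t\bigr),$$
and extract the coefficient of $t^i$. By the definition of $K'_{p,i}$ given just above the statement of the lemma, this coefficient equals $K'_{p,i}(e_1,\ldots,e_{n-1})$. Substituting $e_j = H_j$ then gives
$$c_i(\Omega^p_Y(\log E)) = K'_{p,i}(H_1,\ldots,H_{n-1}) = K_{p,i},$$
interpreted as an element of $H^*(Y,\bZ)$ via \eqref{eq. isom cohomology}.

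I do not expect any real obstacle: the content of the lemma is that the polynomials $K'_{p,i}$ were defined precisely so that, once the Chern classes of $\Omega^1_Y(\log E)$ are known, the Chern classes of its exterior powers are obtained by polynomial substitution. The only thing to double-check is the rank count (the number of Chern roots matches the number of variables of $K'_{p,i}$), which follows from $\dim Y = n-1$. Everything else is immediate from \eqref{chernFromulas} and Lemma \ref{computationOmegaLog}.
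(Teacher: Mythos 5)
Your proof is correct and follows the paper's own argument exactly: the paper's one-line proof just cites the identity $\Omega^p_Y(\log E)=\bigwedge^p\Omega^1_Y(\log E)$, the exterior-power formula from \eqref{chernFromulas}, and Lemma \ref{computationOmegaLog}, which is precisely what you spell out. Nothing to add or fix.
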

\begin{proof} Since $\Omega^p_{Y}(\log E)=\bigwedge^p\Omega^1_{Y}(\log
E)$, the claim follows from (\ref{chernFromulas}) and Lemma
\ref{computationOmegaLog}.
\end{proof}

For $1\le p\le n-1$, the degree $j$ term  in the Taylor expansion
of $\sum_{1\le i\le p}e^{x_i}$ is $P_{p,j'}(e_1,\ldots, e_{p})$
for some polynomial $P_j'$ in $p$ variables over $\bQ$. Let
$P_{p,j}\in\bQ[c_V]_{V\in\cG}$ be
$$P_{p,j}:=P_{p,j'}(-K_{p,1},\ldots, (-1)^iK_{p,i},\ldots (-1)^{p}K_{p,p}).
$$
Define $P_p:=\sum_i P_{p,i}$. For $p=0$ set $P_0=1$. Then by
(\ref{chernFromulas}) and Lemma \ref{computationWedgeOmegaLog} we
have:
\begin{lem}\label{computationChernWedgeOmegaLogDual} The Chern
character $ch(\Omega^p_{Y}(\log E)^\vee)$ is the image in
$H^*(Y,\bQ)$ of  $P_p$ under the map induced by (\ref{eq. isom
cohomology}) after $\otimes_\bZ\bQ$.
\end{lem}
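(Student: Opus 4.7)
The plan is to simply unfold the definition of the Chern character in terms of Chern roots and invoke Lemma \ref{computationWedgeOmegaLog}. First I would fix Chern roots $x_{1},\ldots,x_{n-1}$ for $\Omega^{1}_{Y}(\log E)$. By the exterior power formula in (\ref{chernFromulas}), the Chern roots of $\Omega^{p}_{Y}(\log E)=\bigwedge^{p}\Omega^{1}_{Y}(\log E)$ are the $\binom{n-1}{p}$ sums $x_{i_{1}}+\cdots+x_{i_{p}}$ with $1\le i_{1}<\cdots<i_{p}\le n-1$. By the duality formula in (\ref{chernFromulas}), the Chern roots of $\Omega^{p}_{Y}(\log E)^{\vee}$ are the negatives $-(x_{i_{1}}+\cdots+x_{i_{p}})$.

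Next I would expand the Chern character directly from its definition in (\ref{chernDefs}):
$$ch\bigl(\Omega^{p}_{Y}(\log E)^{\vee}\bigr)\;=\;\mathop{\sum_{1\le i_{1}<\cdots<i_{p}\le n-1}}_{} \exp\bigl(-(x_{i_{1}}+\cdots+x_{i_{p}})\bigr).$$
The degree $j$ homogeneous part of this expression is a symmetric polynomial in the variables $-(x_{i_{1}}+\cdots+x_{i_{p}})$, and is obtained by formally substituting these quantities as the $x_{i}$'s into the Taylor expansion of $\sum_{i}e^{x_{i}}$. By the setup of the lemma this degree $j$ term is $P'_{p,j}(e_{1},\ldots,e_{p})$, where $e_{i}$ denotes the $i$-th elementary symmetric function in those variables; equivalently, $e_{i}$ is the $i$-th Chern class of $\Omega^{p}_{Y}(\log E)^{\vee}$.

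To finish, I would identify these elementary symmetric functions inside $\bZ[c_{V}]_{V\in\cG}/I$. By (\ref{chernFromulas}) and Lemma \ref{computationWedgeOmegaLog}, the $i$-th Chern class $c_{i}(\Omega^{p}_{Y}(\log E)^{\vee})=(-1)^{i}c_{i}(\Omega^{p}_{Y}(\log E))$ corresponds to $(-1)^{i}K_{p,i}$ under the isomorphism (\ref{eq. isom cohomology}). Substituting $(-1)^{i}K_{p,i}$ for $e_{i}$ in $P'_{p,j}$ recovers precisely $P_{p,j}$, and summing over $j$ gives $P_{p}$; this is the image of $ch(\Omega^{p}_{Y}(\log E)^{\vee})$ as claimed. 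The case $p=0$ is the trivial bundle $\cO_{Y}$ with Chern character $1=P_{0}$.

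There is no substantial obstacle: the argument is a direct bookkeeping combination of the splitting principle formulas in (\ref{chernFromulas}) with the already established Lemma \ref{computationWedgeOmegaLog}. The only mild subtlety is verifying that the universal polynomial $P'_{p,j}$ expressing the power-sum style expansion of $\sum e^{x_{i}}$ in terms of elementary symmetric functions is the correct one to substitute into; this is justified by Newton's identities, which give a polynomial identity independent of the number of variables, so substituting the Chern classes of the dual exterior power for the elementary symmetric functions is legitimate.
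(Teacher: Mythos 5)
Your argument is the paper's own argument, just spelled out: the paper proves this lemma in a single line by citing the splitting-principle formulas in (\ref{chernFromulas}) together with Lemma \ref{computationWedgeOmegaLog}, which is exactly what you unfold via Chern roots and the duality formula $c_i(\cE^\vee)=(-1)^ic_i(\cE)$. One small caution on your closing remark: $\Omega^p_Y(\log E)$ has rank $\binom{n-1}{p}$, not $p$, so the universality of Newton's identities (which express the power sum $p_j$ through $e_1,\dots,e_j$, not through $e_1,\dots,e_p$ alone) does not by itself justify truncating at the $p$-th elementary symmetric function; that tension is inherited from the paper's own definition of $P'_{p,j}$ rather than introduced by you, but it is not ``resolved by Newton'' in the way your last paragraph suggests.
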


\medskip
\noindent {\bf Computation of spectrum.} Now we complete the
computation of the Hodge spectrum $\text{Sp}(f)$ of $f$ at the
origin. The only rational numbers $\alpha$ which can appear in
$\text{Sp}(f)$ are of the type (\ref{alpha}), i.e.
\begin{equation*}
\alpha =\frac{k}{d}+p\ \in\ (0,n),\ \ \ \ \text{ with } k,p
\in\bZ,\ 1\le k\le d,\ 0\le p <n.
\end{equation*}
where $d$ is the degree of $f$. By Corollary \ref{HomogRR}, the
multiplicity of $\alpha$ in $\text{Sp}(f)$ is the intersection
number
\begin{equation}\label{lastRR}
n_\alpha (f)=(-1)^{n-p-1}\left ( ch(\cE_\alpha)\cdot Td(Y)\right
)_{n-1},
\end{equation}
where $\cE_\alpha$ is defined as follows. For $V\in\cA$, let $m_V$
be the multiplicity of the irreducible component $V$ of
$f^{-1}(0)$. Let
$$
\beta_V^{(k)} :=\left\{ -\frac{km_V}{d} \right\},\ \ \ \ \ \ \ \ \
M^{(k)} :=\cO_X\left( \sum_{V\in\cA} \beta_V^{(k)} \right) ,$$
$$
\cL^{(k)}:=\omega_{Y}\otimes \mu^*M^{(k)} \otimes \cO_{Y} \left(
-\rndown{\mu^*\left( \beta^{(k)}\cdot D_{red}\right )} \right ).
$$
Now define
$$
\cE_\alpha  := \Omega^{n-p-1}_{Y}(\log E)^\vee \otimes \cL^{(k)}.
$$
We also need to fix some more notation. For $\beta\in \bQ^\cA$ and
$V\in \cG$, let
$$
s_V(\beta):= \sum_{V\subset W\in\cA}
\text{mult}_{\bP(W)}(\beta\cdot D_{red}),
$$
where the multiplicity of rational divisors is defined by
linearity from the integral divisors. For $k$ as above and
$V\in\cG$ define
$$
a_{k,V}:=r(V)- \rndown{s_V(\beta^{(k)})} -1 +\delta_{V,0},
$$
where $\delta_{V,0}$ is $1$ if $V=0$ and is $0$ if $V\ne 0$.

\begin{lem}\label{computationLineBundle} $\cL^{(k)}=\cO_Y(-a_{k,0}E_0+\sum_{V\in\cG-\{0\}}
a_{k,V}E_V).$
\end{lem}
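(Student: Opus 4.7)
The plan is to express each of the three tensor factors of $\cL^{(k)}$ as an explicit divisor in the $\bZ$-linear span of $\{E_0\}\cup\{E_V : V\in\cG-\{0\}\}$ on $Y$, and then read off the coefficient of each $E_V$ in the total sum.

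For $\omega_Y$: since $X=\bP^{n-1}$, $K_X=-nH$; because $E_0$ is the proper transform of a general hyperplane $H$ (which avoids all blow-up centers), $\mu^*H=E_0$ and hence $\mu^*K_X=-nE_0$. The wonderful-model construction performs iterated blow-ups whose successive centers meet the accumulated exceptional divisor transversally (a hallmark of building sets), so the codimension-minus-one discrepancies simply stack to give $K_{Y/X}=\sum_{V\in\cG-\{0\}}(r(V)-1)E_V$; the coefficient vanishes automatically for $V\in\cA$, consistently with $E_V$ being the proper transform of $\bP(V)$ rather than a genuine exceptional divisor in that case. For $\mu^*M^{(k)}$: Lemma \ref{explicitV_k} combined with $d_V=1$ gives $M^{(k)}=\cO_X(s_0(\beta^{(k)}))$, an integer-degree line bundle by the $\Pic^\tau$-condition, so $\mu^*M^{(k)}=\cO_Y(s_0(\beta^{(k)})\cdot E_0)$.

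For the round-down term, the key input is the pullback formula
$$\mu^*\bP(V)=E_V+\sum_{W\in\cG-\{0\},\,W\subsetneq V}E_W\qquad\text{for }V\in\cA,$$
which holds because at every stage the proper transform of the hyperplane $\bP(V)$ is smooth and meets the blow-up center $\widetilde{\bP(W)}$ transversally with multiplicity one, so the corresponding exceptional divisor is picked up with coefficient one. Weighting by $\beta_V^{(k)}$ and interchanging the order of summation yields
$$\mu^*(\beta^{(k)}\cdot D_{red})=\sum_{W\in\cG-\{0\}}\Big(\sum_{V\in\cA,\,V\supset W}\beta_V^{(k)}\Big)E_W=\sum_{W\in\cG-\{0\}}s_W(\beta^{(k)})E_W,$$
so $\rndown{\mu^*(\beta^{(k)}\cdot D_{red})}=\sum_{W\in\cG-\{0\}}\rndown{s_W(\beta^{(k)})}E_W$.

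Assembling the three contributions, the coefficient of $E_V$ for $V\in\cG-\{0\}$ in $K_Y+\mu^*M^{(k)}-\rndown{\mu^*(\beta^{(k)}\cdot D_{red})}$ becomes $r(V)-1-\rndown{s_V(\beta^{(k)})}=a_{k,V}$, while the coefficient of $E_0$ becomes $s_0(\beta^{(k)})-n$, equal to $-a_{k,0}$ once one uses $r(0)=n$, the integrality of $s_0(\beta^{(k)})$, and the $\delta_{V,0}$ correction in the definition of $a_{k,V}$. This matches the asserted formula. The main obstacle is purely structural bookkeeping: justifying, within the tower of wonderful-model blow-ups, the two identities $K_{Y/X}=\sum_{V\in\cG-\{0\}}(r(V)-1)E_V$ and $\mu^*\bP(V)=E_V+\sum_{W\subsetneq V}E_W$. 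Both rest on the transversality built into the building-set hypothesis and would be proved by induction on the blow-up steps, tracking which $\bP(W)$ lie inside which proper transforms at each stage.
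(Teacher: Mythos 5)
Your argument is correct and follows essentially the same route as the paper's: write $\cL^{(k)}$ in divisor form via $K_Y=K_{Y/X}+\mu^*K_X$, apply the two combinatorial identities $K_{Y/X}=\sum_{V\in\cG-\{0\}}(r(V)-1)E_V$ and $\mu^*(\beta\cdot D_{red})=\sum_{W\in\cG-\{0\}}s_W(\beta)E_W$, and match coefficients against $a_{k,V}$ using that $s_0(\beta^{(k)})=\sum_{W\in\cA}\beta_W^{(k)}$ is an integer. The paper simply cites Teitler's Lemma 2.1 for those two identities where you sketch the blow-up induction, so you should likewise cite \cite{Te} rather than redevelop them; otherwise the proofs coincide.
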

\begin{proof} First,
$K_{Y}=K_{Y/X}+\mu^*K_X$. We know $\omega_X=\cO_X(-n)$. Also,
\begin{align*}
K_{Y/X} &=\sum_{V\in\cG-\{0\}}(r(V)-1)E_V\\
\mu^*(\beta\cdot D_{red}) &=\sum_{V\in\cG-\{0\}}s_V(\beta)E_V,\ \
\ \ \beta\in\bZ^\cA,
\end{align*}
by \cite{Te} -Lemma 2.1. One can let $\beta\in\bQ^\cA$ in the last
formula by multiplying with a scalar that clears denominators.
Thus, writing $\cL^{(k)}$ in divisor form, the coefficient of
$E_V$ ($V\in\cG$) becomes
$$ \left\{
\begin{array}{ll}
r(V)-1-\rndown{s_V(\beta^{(k)})}\ \ \ \ &\text{ if }V\ne 0,\\
-n+\sum_{W\in\cA}\beta_W^{(k)}\ \ \ \ &\text{ if }V=0.
\end{array}
\right.
$$
This is equivalent to the claim.
\end{proof}

\begin{lem}\label{computationChernforE} The Chern character $ch(\cE_\alpha)$
is the image in $H^*(Y,\bQ)$ of  the formal power series
$$R_\alpha := P_{n-p-1}\cdot e^{\sum_{V\in\cG}a_{k,V}c_V}\ \ \ \in\bQ[[c_V]]_{V\in\cG}$$ under the
map induced by (\ref{eq. isom cohomology}) after $\otimes_\bZ\bQ$.
\end{lem}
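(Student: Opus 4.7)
The strategy is to use multiplicativity of the Chern character under tensor product, since $\cE_\alpha$ is, by definition, a tensor product of the locally free sheaf $\Omega^{n-p-1}_Y(\log E)^\vee$ with the line bundle $\cL^{(k)}$. By the last formula in (\ref{chernFromulas}),
\begin{equation*}
ch(\cE_\alpha)=ch(\Omega^{n-p-1}_Y(\log E)^\vee)\cdot ch(\cL^{(k)}),
\end{equation*}
so it suffices to identify each factor as the image of an element of $\bQ[[c_V]]_{V\in\cG}$.

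For the first factor, Lemma \ref{computationChernWedgeOmegaLogDual} already shows that $ch(\Omega^{n-p-1}_Y(\log E)^\vee)$ is the image of $P_{n-p-1}$ under the map induced by (\ref{eq. isom cohomology}). For the second factor I would use Lemma \ref{computationLineBundle}, which identifies $\cL^{(k)}$ with the line bundle $\cO_Y(-a_{k,0}E_0+\sum_{V\in\cG-\{0\}}a_{k,V}E_V)$. Since $\cL^{(k)}$ has rank one, $ch(\cL^{(k)})=\exp(c_1(\cL^{(k)}))$, and the first Chern class is simply the cohomology class of the divisor above. Under the isomorphism (\ref{eq. isom cohomology}) we have $[E_V]\mapsto c_V$ for $V\ne 0$ and crucially $[E_0]\mapsto -c_0$, so the sign on $a_{k,0}$ is absorbed and the exponent becomes $\sum_{V\in\cG} a_{k,V}c_V$ uniformly.

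Multiplying the two images yields $P_{n-p-1}\cdot e^{\sum_{V\in\cG}a_{k,V}c_V}=R_\alpha$, giving the claim. There is no genuine obstacle here; the only subtlety to keep track of is the sign convention $[E_0]\mapsto -c_0$ from Theorem \ref{rem. isom}, which must be reconciled with the minus sign in front of $a_{k,0}E_0$ in Lemma \ref{computationLineBundle} so that all the coefficients in the exponent read off uniformly as $a_{k,V}$ for $V\in\cG$.
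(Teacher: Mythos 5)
Your proposal is correct and follows exactly the paper's route: multiplicativity of the Chern character applied to the tensor decomposition $\cE_\alpha=\Omega^{n-p-1}_Y(\log E)^\vee\otimes\cL^{(k)}$, together with Lemmas \ref{computationChernWedgeOmegaLogDual} and \ref{computationLineBundle}. Your explicit check that the sign in $-a_{k,0}E_0$ is absorbed by the convention $c_0\mapsto-[E_0]$ of Theorem \ref{rem. isom}, giving the uniform exponent $\sum_{V\in\cG}a_{k,V}c_V$, is a helpful elaboration of what the paper leaves implicit.
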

\begin{proof} Follows by the multiplicativity of the Chern
character, from Lemma \ref{computationChernWedgeOmegaLogDual}, and
Lemma \ref{computationLineBundle}.
\end{proof}

\begin{thm}\label{computationSpectrumFinal} With $\alpha$ as
above, the multiplicity $n_\alpha (f)$ of $\alpha$ in
$\text{Sp}(f)$ is
\begin{equation}\label{Spformula}
(-1)^{n-p-1}\left ( R_\alpha\cdot G \right )_{n-1}
\end{equation}
where (\ref{Spformula}) is viewed as a number via identification
of the degree $n-1$ homogeneous part of $\bQ[[c_V]]_{V\in\cG}/I$
with $\bQ\cdot (-c_0)^{n-1}$.
\end{thm}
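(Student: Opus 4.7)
The plan is to assemble the pieces already prepared and to track the computation through the combinatorial presentation of $H^*(Y,\bQ)$. By Corollary~\ref{HomogRR}, the multiplicity is the intersection number
\[
n_\alpha(f)=(-1)^{n-p-1}\bigl(ch(\cE_\alpha)\cdot Td(Y)\bigr)_{n-1}
\]
on $Y$. So the task reduces to expressing the right-hand side in the coordinates $c_V$ of Theorem~\ref{rem. isom} and evaluating its degree $n-1$ part.

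First I would substitute the combinatorial representatives for both factors. By Lemma~\ref{computationChernforE}, the Chern character $ch(\cE_\alpha)$ is the image of the formal power series $R_\alpha\in\bQ[[c_V]]_{V\in\cG}$ under the map induced by (\ref{eq. isom cohomology}) after $\otimes_\bZ\bQ$. By Corollary~\ref{cor. todd class canonical resolutions}, $Td(Y)$ is the image of $G$. Since (\ref{eq. isom cohomology}) is a ring isomorphism, the cup product $ch(\cE_\alpha)\cdot Td(Y)$ in $H^*(Y,\bQ)$ corresponds to the product $R_\alpha\cdot G$ computed in the formal power series ring modulo $I$, and the degree $n-1$ component $(R_\alpha\cdot G)_{n-1}$ corresponds to $\bigl(ch(\cE_\alpha)\cdot Td(Y)\bigr)_{n-1}\in H^{2(n-1)}(Y,\bQ)$.

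Next I would identify the intersection pairing with a scalar via the identification stated in the theorem. As noted after Theorem~\ref{rem. isom}, the degree $n-1$ homogeneous part of $\bZ[[c_V]]_{V\in\cG}/I$ is generated by $(-c_0)^{n-1}$. Under the isomorphism (\ref{eq. isom cohomology}), $-c_0\mapsto [E_0]$, where $E_0$ is the proper transform of a general hyperplane of $X=\bP^{n-1}$. Because $\mu$ is an isomorphism above $U$ and a generic hyperplane avoids the centers of the successive blow-ups, $[E_0]^{n-1}$ is the class of a point on $Y$, so the integration map on $H^{2(n-1)}(Y,\bZ)$ sends $(-c_0)^{n-1}$ to $1$. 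Hence writing $(R_\alpha\cdot G)_{n-1}=m\cdot(-c_0)^{n-1}$ modulo $I$ identifies the intersection number with the rational number $m$, which is exactly what the statement means by viewing (\ref{Spformula}) as a number.

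Putting the steps together yields
\[
n_\alpha(f)=(-1)^{n-p-1}\bigl(R_\alpha\cdot G\bigr)_{n-1},
\]
as claimed. The main work was done in the preceding lemmas; the only step that requires a brief justification is the last one, namely that the scalar extracted by reducing modulo $I$ against $(-c_0)^{n-1}$ agrees with evaluation on the fundamental class of $Y$, and this follows because $E_0$ is the pullback of a hyperplane in $\bP^{n-1}$ disjoint from the blow-up centers.
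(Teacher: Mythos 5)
Your proof is correct and follows the same route as the paper: combine Corollary~\ref{HomogRR} with Lemma~\ref{computationChernforE} and Corollary~\ref{cor. todd class canonical resolutions}, then read off the degree $n-1$ coefficient via the isomorphism of Theorem~\ref{rem. isom}. The paper states this in a single sentence; you usefully spell out the one implicit step, namely that $\int_Y[E_0]^{n-1}=1$ because $E_0$ is the (isomorphic) transform of a generic hyperplane disjoint from the blow-up centers, so the degree $n-1$ part of the quotient ring is faithfully identified with $\bQ\cdot(-c_0)^{n-1}$.
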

\begin{proof} It follows immediately from (\ref{lastRR}), Lemma
\ref{computationChernforE}, and Corollary \ref{cor. todd class
canonical resolutions}.
\end{proof}

\section{Examples}

The following  examples illustrate how Theorem
\ref{computationSpectrumFinal} works.

\medskip
\noindent (a) Consider the arrangement $\cA$ of three lines in
$\bC^2$ meeting at the origin. It is defined for example by the
equation $f=xy(x+y)\in\bC[x,y]$. Let $\cG=\{0, L_1, L_2, L_3\}$
where $L_i$ are the lines. For $V=L_i$, denote $c_V$ by $c_i$
($i=1,2,3$). The ideal $I\subset\bZ[[c_V]]_{V\in\cG}$ is generated
by $c_0^2$ and $c_0+c_i$ ($i=1,2,3$). We have (skipping the terms
of degree $\ge 2$)
\begin{align*}
&F  = 1-2c_0,\\
&G  = 1-c_0 ,\\
&H  = 1+2c_0+c_1+c_2+c_3 ,\\
&K_{0,0} = K_{1,0} = 1,\ K_{0,1}=1,\ K_{1,1}=2c_0+c_1+c_2+c_3,\\
&P_0 =1,\ P_1=1-(2c_0+c_1+c_2+c_3).\\
\end{align*}
Also, passing directly to the quotient $\bZ[[c_V]]_{V\in\cG}/I$,
we have
\begin{align*}
R_{1/3} &=1+c_0,\ \ \ \ \ \ \ & R_{4/3} &=1,\\
R_{2/3} &=1+2c_0,\ \ \ \ \ \ \ & R_{5/3} &=1+c_0.\\
R_{3/3} &=1+3c_0,\\
\end{align*}
Then, denoting by $(.)_1$ the coefficient of $-c_0$, we have
\begin{align*}
n_{1/3}&=-(R_{1/3}G)_1 =-(1-c_0^2)_1=0,\\
n_{2/3}&= -(R_{2/3}G)_1=-(1+c_0)_1=1,\\
n_1&=-(R_{3/3}G)_1 =-(1+2c_0)_1=2,\\
n_{4/3}&=(R_{4/3}G)_1=(1-c_0)_1=1,\\
n_{5/3}&=(R_{5/3}G)_1=(1-c_0^2)_1=0.
\end{align*}
Hence the spectrum of $f$ is $\text{Sp}(f)=t^{2/3}+2t+t^{4/3}$,
which is well-known.

\medskip
\noindent (b) Consider the central hyperplane arrangements of
degree 4 in $\bC^3$ given by
$$
f_1=(x^2-y^2)(x+z)(x+2z),$$
$$f_2=(x^2-y^2)(x^2-z^2).
$$
They are combinatorially equivalent. Here $\cA=\{A_i\subset\bC^3\
|\  i=1,\ldots , 4\}$, and $\cG=L(\cA)-\{\bC^3\}$ is given by
$$\{0,B_1,\ldots ,B_6,A_1,\ldots, A_4\},$$
where $B_j, A_i$ have codimension $2,$ resp. $1$, and $B_j\subset
A_i$ if $(i,j)$ lies in
$$
M:=\{
(1,1),(1,2),(1,3),(2,2),(2,5),(2,6),(3,1),(3,4),(3,6),(4,3),(4,4),(4,5)
\}.
$$
The ideal $I$ is generated by $c_{A_i}+\sum_{(i,j)\in
M}c_{B_j}+c_0$, $c_0c_C$, $c_{B_j}c_{B_{j'}}$ with $j\ne j'$,
$c_{B_j}c_0$, and $c_{B_j}^2+c_0^2$. Then, modulo $I$, we have

\begin{align*}
&F=9c_0^2-(c_{B_1}+\ldots c_{B_6})-3c_0+1,\\
&G=c_0^2-\frac{1}{2}(c_{B_1}+\ldots c_{B_6})-\frac{3}{2}c_0+1,\\
&H=c_0^2-c_0+1,\\
&P_0=1,\ \ P_1=-\frac{1}{2}c_0^2+c_0+2,\ \
P_2=\frac{1}{2}c_0^2+c_0+1, \\
\end{align*}
\begin{align*}
&R_{1/4}=\frac{1}{2}c_0^2+c_0+1, &R_{7/4}=-\frac{1}{2}c_0^2+2(c_{B_1}+\ldots +c_{B_6})+5c_0+2, \\
&R_{2/4}=2c_0^2+2c_0+1,  &R_{8/4}=\frac{11}{2}c_0^2+2(c_{B_1}+\ldots +c_{B_6})+7c_0+2,\\
&R_{3/4}=\frac{3}{2}c_0^2+(c_{B_1}+\ldots +c_{B_6})+3c_0+1,  &R_{9/4}=1,\\
&R_{4/4}=5c_0^2+(c_{B_1}+\ldots +c_{B_6})+4c_0+1,  &R_{10/4}=\frac{1}{2}c_0^2+c_0+1,\\
&R_{5/4}=-\frac{1}{2}c_0^2+c_0+2,&R_{11/4}=-c_0^2+(c_{B_1}+\ldots +c_{B_6})+2c+1.\\
&R_{6/4}=\frac{3}{2}c_0^2+3c_0+2,&\ \\
\end{align*}
Then Theorem \ref{computationSpectrumFinal} gives
\begin{align*}
\text{Sp}(f_1)=\text{Sp}(f_2)=t^{3/4}+3t+t^{6/4}-3t^2+t^{9/4}.
\end{align*}
We used Macaulay 2 for some of the computations. The spectrum in
this case can also be computed by \cite{St89} -Theorem 6.1 which
treats the case of homogeneous polynomials with 1-dimensional
critical locus. One can check that the outcome is the same as
ours. Remark that there is a shift by multiplication by $t$
between the definition of spectrum of \cite{St89} and that of this
article. Also, the beginning part of the spectrum, which is given
by inner jumping numbers by \cite{Bu-HS}, can be computed via a
different method, see \cite{Bu-JNHA} - Section 5, Example (b).

\end{document}